\newcommand{\scrB}{{\mathscr{B}}}
\newcommand{\E}{\mathbb{E}} 
\newcommand{\R}{\mathbb{R}}
\def\pr{\mathbb{P}}
\def\equald{{\stackrel{d}{=}}}
\newcommand{\mean}{{\E}}
\def\prob{{\pr}}
\def\var{{\rm Var}}
\def\pR{{\R_+^\circ}}%positive real numbers 
\newcommand{\convd}{{\stackrel{d}{\longrightarrow}}}
\newcommand{\ca}{{\cal A}}
\newcommand{\cb}{{\cal B}}
\newcommand{\cd}{{\cal D}}
\newcommand{\cf}{{\cal F}}
\newcommand{\ch}{{\cal H}}
\newcommand{\ck}{{\cal K}}
\newcommand{\cl}{{\cal L}}
\newcommand{\cs}{{\cal S}}
\def\ignore#1{}
\def\convp{\stackrel{{{\prob}}}{\longrightarrow}}
\def\Ref#1{(\ref{#1})}
\def\MXs{M^\Xi_\sigma}
\def\MLs{M^\Lambda_\sigma}
\def\pxs{p^{(x)}_\sigma}
\def\1{{\bf 1}}
\def\Var{{\rm Var}}
\newcommand{\beas}{\begin{eqnarray*}}
\newcommand{\enas}{\end{eqnarray*}}
\newcommand{\bea}{\begin{eqnarray}}
\newcommand{\ena}{\end{eqnarray}}
\def\eq{\begin{equation}}
\def\en{\end{equation}}
\newtheorem{Theorem}{Theorem}[section]
\newtheorem{Corollary}[Theorem]{Corollary}
\newtheorem{Lemma}[Theorem]{Lemma}
\newtheorem{Proposition}[Theorem]{Proposition}
\theoremstyle{definition}
\newtheorem{Definition}[Theorem]{Definition}
\newtheorem{Remark}[Theorem]{Remark}
\newtheorem{Example}[Theorem]{Example}
\newtheorem{Setup}[Theorem]{Setup}
\renewcommand\theequation{\thesection.\@arabic\c@equation}
\def\IR{\R}
\def\IN{\mathbb{N}}
\def\abs#1{|#1|}
\def\be#1{\begin{equation*}#1\end{equation*}}
\def\ben#1{\begin{equation}#1\end{equation}}
\def\eq#1{\eqref{#1}}
\def\ba#1{\begin{align*}#1\end{align*}}
\def\ban#1{\begin{align}#1\end{align}}
\def\eps{\varepsilon}
\def\mcI{\mathcal{I}}
\def\dtv{d_{\textrm{TV}}}
\def\law{\mathcal{L}}
\def\t#1{^{(#1)}}
\def\res#1{|_{#1}}
\def\tt{\tau}
\def\C{\mathbb{C}}
\begin{document}

\title{%Poisson process approximation for the signal strength in wireless networks\\
%Approximating wireless network signals with Poisson processes\\
%Wireless network signals appear Poisson\\
%When wireless network signals appear Poisson\\
When do wireless network signals appear Poisson?
}

\author{H. Paul Keeler\thanks{Weierstrass Institute, Berlin, Germany; keeler@wias-berlin.de; 
work supported in part by Australian Research Council Discovery Grant DP110101663},  
Nathan Ross\thanks{Department of Mathematics and Statistics, the University of Melbourne, 
Parkville, VIC 3010, Australia; nathan.ross@unimelb.edu.au}
and Aihua Xia\thanks{Department of Mathematics and Statistics, the University of Melbourne, 
Parkville, VIC 3010, Australia; aihuaxia@unimelb.edu.au;
work supported in part by Australian Research Council Discovery Grant DP120102398}}

%\thanks{University of Melbourne and Weierstrass Institute for Applied Analysis and Stochastics, keeler@wias-berlin.de; work supported in part by Australian Research Council Grants No DP110101663}

\date{\today}

\maketitle

\abstract{We consider the point process of signal strengths from transmitters in a wireless network observed from a fixed position under models with general
signal path loss and random propagation effects. We show via coupling arguments that under general conditions this point process of signal strengths can be 
well-approximated by an inhomogeneous Poisson or a Cox point processes on the positive real line. 
We also provide some bounds on the total variation distance between the laws of these point processes and both Poisson and Cox point processes.
Under appropriate conditions, these results support the use of a
spatial Poisson point process for the underlying positioning of transmitters in models of wireless networks, even if in reality the positioning does not appear Poisson. 
We apply the results to a number of models with popular choices for positioning of transmitters, path loss functions, and distributions of propagation effects.  
}

\section{Introduction}

In this article we study signal strengths in stochastic models of wireless networks such as ad hoc, sensor, and mobile or cellular phone networks. The building blocks of the models considered are point processes (from now on, we simply refer to them as processes) and stochastic geometry; 
for background see\cite{baccelli2009stochastic1,baccelli2009stochastic2,haenggi2009stochastic,haenggi2012stochastic}. 
Details of models can differ depending on the type of network, but the standard framework assumed throughout the article is that 
an observer is placed at the origin of $\IR^d$ and transmitters are located at positions $\xi=\{x_i: \ i\in \IN:=\{1,2,\dots\}\}\subseteq \IR^d/\{0\}$
(equivalently we write $\xi=\sum_{i\in\IN}\delta_{x_i}$, where $\delta_x$ is the Dirac measure at $x$) either deterministically  or according to a random process.
In the absence of  \emph{propagation effects},
the signal received by the observer from a transmitter located at $x\in\IR^d/\{0\}$ has strength given by a deterministic \emph{path loss} function $\ell(x)$, typically taken to be a function of $\abs{x}$, the distance to the origin; a standard 
assumption is $\ell(x)= C \abs{x}^{-\beta}$ for some $\beta>0, C>0$. The random propagation effects are assumed to influence the strength of the signal via, for example, \emph{multipath fading} (due to signals taking multiple paths and colliding with each other) and \emph{shadow fading} or \emph{shadowing} (due to signals colliding with large obstacles such as buildings). {Although such  effects may occur on different scales,} we use the general term ``fading" to refer to {all} types of random propagation effects, which are incorporated into the model via a sequence of i.i.d.\ positive {random} variables $S, S_1, S_2,\ldots$, where the signal power or strength from transmitter $x_i$ is given by
\begin{equation}
P_i=\ell(x_i) S_i=:\frac{S_i}{g(x_i)};
\end{equation}
here and below $g(x):=1/\ell(x)$.
Understanding the distribution of the process 
\begin{equation}
\Pi:=\{P_i\}_{i\in\IN}, 
\end{equation}
on ${\pR:=(0,\infty)}$
 and various functions of it (for example, the signal-to-interference ratio $\{P_i/(\sum_{j} P_j - P_i)\}_{i\in \IN}$, or the {largest signal strength} $\max_i\{P_i\}$)
under different assumptions on~$S$, $g$ and $\xi$ is a major goal of wireless network modeling.

A common assumption is that the transmitter positions $\xi$ are given by a homogeneous Poisson process. In this case easy theory 
implies that $\Pi$ is also a {(typically inhomogeneous)} Poisson process (easy theory is the Poisson mapping theorem: the pairs $\{(x_i, S_i)\}_{x_i\in\xi}$ form a  Poisson process
and the points of $\Pi$ are a measurable function of these points). The only potential difficulty with this framework is with computing the mean
measure of $\Pi$ for explicit choices of $S$ and $g$. The assumption that the transmitters follow a Poisson process is usually justified by  
thinking of the observer as an ``average" observer {with 
fixed transmitters}, though empirical studies are less clear on the matter~\cite{lee2013stochastic}.
Our main purpose is to investigate
the behavior of $\Pi$ when the Poisson process transmitter assumption is relaxed and in particular to answer the titular question
of the present article.

A first step is taken in \cite{blaszczyszyn2013using,blaszczyszyn2014wireless} where it is shown that if ${S=}S(\sigma) = \exp\{\sigma B-\sigma^2/\beta\}$
 for $B$ a standard normal {random} variable and $\beta>2$, $g(x)=(K \abs{x})^\beta$ for a constant $K>0$, and $\xi$ is such that {$|\xi|(r)$, the number of points within
 distance $r$ of the origin, satisfies} ${|\xi|(r)/(\pi r^2)}\to \lambda>0$ (a.s.\ if $\xi$ is random), then as $\sigma\to\infty$, $\Pi$ converges to a Poisson process.
One of our main theorems is to greatly generalize this result, providing simple criteria for Poisson convergence.

A critical point before stating the theorem: to match empirical observation, the signal power process $\Pi$ 
should have a large number of very weak signals, which means any approximating Poisson process should have a mean measure with a (near) singularity at zero. 
To focus on the non-singular part, we instead study the process on ${\pR}$ 
of the inverses of the signal power values, 
\[
N=\{P_i^{-1}\}_{i\in \IN}=\left\{\frac{g(x_i)}{S_i}\right\}_{i\in\IN},
\]
though results about $N$ typically can be translated to results
for $\Pi$ by inverting the points and using the Poisson mapping theorem; see, for example, Remark~\ref{remN2P} below.  
The process $N$ on ${\pR}$ is referred to as {\em propagation (loss) process}~\cite{blaszczyszyn2010impact} (or {\em path loss with fading process}~\cite{haenggi2008geometric}) {generated by $S$, $g$ and $\xi$}. 
We have the following result; denote convergence in probability by~$\convp$.

\begin{Theorem}\label{thmshadow}
Let $\xi{\subset\IR^d/\{0\}}$ be a locally finite set of points such that there is a nondecreasing function $D$ satisfying (almost surely if $\xi$ is random)
\ben{
\lim_{r\to\infty} \frac{{|\xi|}(r)}{D(r)}=1 \,\,\,\,\mbox{ and } \,\,\,\,\, \lim_{r\to0} D(r)=0, \label{ee1}
}
and let {$g:\IR^d\to\IR_+:=[0,\infty)$
such that $g$ is positive on $\IR^d/\{0\}$ and $g(x)=h(\abs{x})$ for $h$ a left continuous and nondecreasing
 function with inverse $h^{-1}(y):=\inf\{x: h(x)> y\}$.}
Let $(S(\sigma))_{\sigma\geq0}$ be a family of positive random variables indexed by some non-negative parameter~$\sigma$, $N\t \sigma$ be the propagation 
process generated by $S(\sigma)$, $g$ and $\xi$.
If
\be{
(i)\ S(\sigma)\convp0\,\,\,\, \mbox{ and } \,\,\,  (ii)\ \mean D( h^{-1}(S(\sigma) t)) \to L(t),\ {\forall t\in C(L),}
}
as $\sigma\to\infty$, {where $C(L):=\{t\in\pR:\ \lim_{s\to t} L(s)=L(t)\}$,} then $N\t \sigma$ converges weakly to a Poisson process {on $\pR$} with mean measure $L$.
\end{Theorem}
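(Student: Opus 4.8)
The plan is to establish convergence of Laplace functionals, which characterizes weak convergence of point processes to the Poisson process with mean measure $L$ (and, usefully, does not require $L$ to be non-atomic). Fix $f\ge 0$ continuous with compact support in $\pR$ and write $r_i=\abs{x_i}$. Since the fading variables $(S_i(\sigma))_{i\in\IN}$ are i.i.d.\ (and, when $\xi$ is random, independent of $\xi$), conditioning on $\xi$ and using independence gives the product form
\ben{\mean\Big[\exp\Big(-\textstyle\sum_i f(g(x_i)/S_i(\sigma))\Big)\,\Big|\,\xi\Big]=\prod_{i}\big(1-a_i^\sigma\big),\qquad a_i^\sigma:=\mean\big[1-e^{-f(h(r_i)/S(\sigma))}\big].}
The target is $\exp(-\int_{\pR}(1-e^{-f(u)})L(du))$, so by a standard null-array estimate (via the elementary bound $\abs{\log(1-a)+a}\le a^2$ valid for $a\le 1/2$, whence $\sum_i\abs{\log(1-a_i^\sigma)+a_i^\sigma}\le\max_i a_i^\sigma\sum_i a_i^\sigma$) it suffices to prove, conditionally on almost every $\xi$, that (a) $\sum_i a_i^\sigma\to\int_{\pR}(1-e^{-f(u)})L(du)$ and (b) $\max_i a_i^\sigma\to0$ as $\sigma\to\infty$. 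Averaging over $\xi$ by dominated convergence (Laplace functionals are bounded by $1$ and the limit is deterministic) then removes the conditioning.

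For (a), observe that $\sum_i a_i^\sigma=\int_{\pR}(1-e^{-f(u)})\,\mu_\sigma(du)$, where $\mu_\sigma(\cdot):=\mean[N\t\sigma(\cdot)\mid\xi]$ is the conditional mean measure, so since $1-e^{-f}\in C_c(\pR)$ it is enough to show $\mu_\sigma$ converges vaguely to $L$. By left-continuity and monotonicity of $h$ one has the Galois identity $h(r)\le y\iff r\le h^{-1}(y)$, giving
\ben{\mu_\sigma\big((0,t]\big)=\sum_i\prob\big(h(r_i)\le S(\sigma)t\big)=\mean\big[\,\abs{\xi}\big(h^{-1}(S(\sigma)t)\big)\big],\qquad t\in\pR.}
Comparing with hypothesis (ii), which yields $\mean[D(h^{-1}(S(\sigma)t))]\to L(t)$ for $t\in C(L)$, the task reduces to showing $\mean[\abs{\xi}(R_\sigma)-D(R_\sigma)]\to0$ for $R_\sigma:=h^{-1}(S(\sigma)t)$. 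This is the heart of the argument and the main obstacle: although (i) forces $R_\sigma\convp0$, so that $R_\sigma$ typically lands in the near-origin region where $\abs{\xi}$ and $D$ need not be comparable, the mass of the expectation is carried by the rare large values of $S(\sigma)$, i.e.\ by the tail $r\to\infty$, which is exactly where \eqref{ee1} gives $\abs{\xi}(r)/D(r)\to1$. To exploit this I would split at a cutoff $M$. On $\{R_\sigma\le M\}$ both $\abs{\xi}(R_\sigma)\le\abs{\xi}(M)$ and $D(R_\sigma)\le D(M)$ are bounded and tend to $0$ in probability (since $R_\sigma\convp0$, $r_1:=\inf_i r_i>0$, and $D(r)\to0$ as $r\to0$), so by bounded convergence $\mean[\abs{\xi}(R_\sigma)\Ind_{R_\sigma\le M}]$ and $\mean[D(R_\sigma)\Ind_{R_\sigma\le M}]$ each vanish as $\sigma\to\infty$ for every fixed $M$. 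On $\{R_\sigma>M\}$ I would write $\abs{\xi}(R_\sigma)-D(R_\sigma)=D(R_\sigma)\,\eps(R_\sigma)$ with $\eps(r):=\abs{\xi}(r)/D(r)-1$ and bound the contribution by $\delta_M\,\mean[D(R_\sigma)]$, where $\delta_M:=\sup_{r>M}\abs{\eps(r)}\to0$ as $M\to\infty$ by \eqref{ee1}. Letting $\sigma\to\infty$ first kills the near part, and then $M\to\infty$ kills $\delta_M$ (with $\mean[D(R_\sigma)]\to L(t)$ bounded), yielding $\mu_\sigma((0,t])\to L(t)$ at every $t\in C(L)$; as the continuity points are dense and both sides are nondecreasing in $t$, this is precisely vague convergence $\mu_\sigma\to L$.

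For (b), since $f$ is supported in some $[a,b]\subset\pR$, the quantity $1-e^{-f(h(r_i)/S(\sigma))}$ vanishes unless $h(r_i)/S(\sigma)\in[a,b]$, so $a_i^\sigma\le\prob(S(\sigma)\ge h(r_i)/b)\le\prob(S(\sigma)\ge h(r_1)/b)$, using that $h$ is nondecreasing and $r_1=\inf_i r_i>0$ by local finiteness of $\xi$ (positive a.s.\ when $\xi$ is random). Because $g$ is positive off the origin, $h(r_1)/b>0$ is a fixed constant, and (i) gives $\prob(S(\sigma)\ge h(r_1)/b)\to0$; hence $\max_i a_i^\sigma\to0$. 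Feeding (a) and (b) into the null-array estimate shows the conditional Laplace functional converges to that of the Poisson process with mean measure $L$, and dominated convergence removes the conditioning, completing the proof. The only genuinely delicate point is the mean-measure matching in (a): reconciling the in-probability collapse $R_\sigma\convp0$ with the fact that the expectation is governed by the large-$r$ regime, which the cutoff argument resolves.
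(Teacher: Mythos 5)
Your proof is correct, and while its analytic core coincides with the paper's, the convergence machinery is genuinely different. The heart is shared: your identity $\mu_\sigma((0,t])=\mean\abs{\xi}\bigl(h^{-1}(S(\sigma)t)\bigr)$ is exactly the paper's formula \Ref{meanform2} (Proposition~\ref{propmean}), and your cutoff argument matching $\mean\abs{\xi}(R_\sigma)$ with $\mean D(R_\sigma)$ --- bounded convergence near the origin using $R_\sigma\convp0$, $r_1=\inf_i\abs{x_i}>0$ and $\lim_{r\to0}D(r)=0$, plus the uniform error $\delta_M$ from \eqref{ee1} in the tail --- is essentially the paper's proof of its key step \eqref{22}, with the cutoff placed in the $r$-variable rather than in the fading variable $s$ at $h(r_\eps)/t$; the two are equivalent through the Galois correspondence $h(r)\le y\iff r\le h^{-1}(y)$, which both arguments use. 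Where you diverge is in upgrading mean-measure convergence plus uniform smallness to process convergence: the paper routes this through its quantitative apparatus --- the Le Cam-type total variation coupling bound of Theorem~\ref{thmdtv}, the Brown--Xia estimates in the Wasserstein metrics $d_{1t},d_{2t}$ behind Corollary~\ref{corollary1}, and Lemma~\ref{lemPPPcon} to piece together restrictions --- whereas you condition on $\xi$, factor the Laplace functional into $\prod_i(1-a_i^\sigma)$ by independence, and apply the elementary null-array bound $\abs{\log(1-a)+a}\le a^2$; your condition (b) is precisely the corollary's hypothesis $\sup_i\pr(0<Y_{ni}\le t)\to0$, verified the same easy way via $h(r_1)>0$ and $S(\sigma)\convp0$. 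Your route is shorter, self-contained, and avoids the metric machinery, but is purely qualitative; the paper's approach buys explicit total variation rates that are theorems in their own right and are reused for Theorems~\ref{thmPPP}, \ref{thmdtv2} and~\ref{thm2}. One small caveat: your additive tail estimate $\delta_M\,\mean D(R_\sigma)$ requires $L(t)<\infty$ at continuity points, i.e.\ that $L$ is Radon --- harmless, since otherwise the limit is not a locally finite point process, though the paper's multiplicative $(1\pm\eps)$ sandwich also covers the degenerate infinite case it mentions in Section~\ref{sec:fad}.
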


\begin{Remark}\label{rem:logno}
The result of \cite[Theorem 3]{blaszczyszyn2013using} already mentioned is an easy consequence of Theorem~\ref{thmshadow}.
Their assumptions are $d=2$, $S(\sigma) = \exp\{\sigma B-\sigma^2/\beta\}$
 for $B$ a standard normal {random} variable and $\beta>2$, $g(x)=(K \abs{x})^\beta$ for a constant $K>0$, and $\xi$ is such that 
 ${|\xi|}(r)/(\pi r^2)\to \lambda>0$. Thus we set $D(r)=\lambda \pi r^2$ and
 then noting that $\mean S(\sigma)^{2/\beta}=1$,
  Theorem~\ref{thmshadow} implies $N\t \sigma$ converges
 weakly to a Poisson process with mean measure
 \[
{L}(t)=\frac{\lambda \pi t^{2/\beta}}{K^2}.
 \]
 Notice that with the same choices of $g$ and $\xi$, 
 \emph{any} $S(\sigma)$ such that  $S(\sigma)\convp0$ and $\mean S(\sigma)^{2/\beta} \to 1$ has the same 
 Poisson process limit as that of the sequence of lognormal {random} variables.
 
 In particular, the same result holds for \emph{composite} fading models
\cite{reig2013estimation} of product type having $S(\sigma)=S_L(\sigma) S_F$ with
$S_L(\sigma)\convp0$ and $\mean S_L(\sigma)^{2/\beta} \to 1$ (for example the lognormal {random} variable above)
 and the random variable $S_F$ is independent of $S_L(\sigma)$ %, where $S_F$  is \emph{any} distribution satisfying  
with $\mean S_F^{2/\beta}=1$. If $S_F$ 
 is an exponential random variable with rate $\Gamma(1+2/\beta)^{\beta/2}$, and $S_L(\sigma)$ is the lognormal {random} variable above,
 then $S(\sigma)$ is known as a Suzuki model~\cite{reig2013estimation}. 
 Actually, all that is required for convergence is $\mean S_F^{2/\beta}<\infty$ in which case $N\t \sigma$ converges to a Poisson process
 with mean measure ${L}(t)=\lambda  \mean S_F^{2/\beta} \pi t^{2/\beta}/K^2$, c.f., \cite[Corollary 12]{blaszczyszyn2014wireless}.

Moreover, by applying the Poisson mapping theorem,
it's
easy to see that the limiting Poisson process can also be realized by applying the function $g$ to the points of a homogenous Poisson process on $\IR^2$ with intensity $\lambda$.
Thus
for modeling purposes, {in many situations where} Theorem~\ref{thmshadow} roughly applies (that is, the fading variables are small with sufficiently large probability) then one may assume that the underlying transmitter
configuration is generated by a Poisson process.
\end{Remark}
  
\begin{Remark} 
Intuitively, because $S(\sigma)$ tends to zero, most points of $\xi$ are being sent out to infinity in $N\t \sigma$. Large values of $S(\sigma)$ transform the  far away points of $\xi$
closer to the origin, so a non-degenerate limit can only occur when, as $\sigma\to\infty$, $S(\sigma)$ becomes large on a set with probability 
inversely proportional to the number of points of $\xi$ at the appropriate distance.
In particular, it's necessary that $\lim_{r\to\infty}D(r)=\infty$ to have a non-degenerate limit process  (that is, with $L(t)$ not identically zero).
\end{Remark}

{\begin{Remark} In essence, the Poisson limit is due to the thinning of the points in $\xi$. However, 
as the retained points are redistributed, the thinning scheme
in this paper is very different from the classical thinning schemes in the literature{, e.g.,}
\cite{Kallenberg1975, Brown1979, Schuhmacher2005, Schuhmacher2009}.
\end{Remark}}

\begin{Remark}
The condition~\eq{ee1} on $\xi$ is satisfied for transmitters placed on a regular lattice excluding the origin or 
that are a realization of a stationary, ergodic process with no points at the origin; in the former case 
we would take $D{(r)}$ proportional to the volume of a ball of radius $r$ in $d$ dimensions. 
\end{Remark}

Theorem~\ref{thmshadow} is one of a number of approximation and convergence results we establish in this paper that apply to the inverse signal strength process $N$; we state these in detail in Section~\ref{seclim}. 
In Section~\ref{secex} we provide an overview of some standard wireless network models and apply our results to these. Section~\ref{secproofs} contains proofs
and we conclude the paper with some discussion in Section~\ref{14102301}.

\section{Poisson process approximating signal powers}\label{seclim}
%To match empirical observation, the signal power process $\Pi$ 
%should have a large number of very weak signals, which means any approximating Poisson process should have a (near) singularity at zero. To focus on the non-singular part, we instead study the process on $\pR$ of the inverses of the signal strengths, $\{P_i^{-1}\}_{i\in \IN}$, though our results below also hold for $\Pi$ by inverting the points and using the Poisson mapping theorem; see, for example, Remark~\ref{remN2P}.  
%The transformation $\{P_i^{-1}\}_{i\in \IN}$ is referred to as the {\em propagation (loss) process}~\cite{blaszczyszyn2010impact} (also {\em path loss with fading process}~\cite{haenggi2008geometric}). 

We state our formal setup.

\begin{Setup}[Main Setup]\label{setup}
Let $\xi{\subset\IR^d/\{0\}}$
be a locally finite  
collection of points in 
$\IR^d$ % $(\IR^d,\cf(\IR^d))$ % (to match our setup above, take $\cs=\IR^d/\{0\}$)
 and $g$ 
be a measurable mapping from $(\IR^d,\scrB(\IR^d))$ to {$({\IR_+},\scrB({\IR_+}))$} such that $\{g(x): x\in\xi\}$ is locally finite; here %$\real_+=(0,\infty)$ (so $g(x)\not=0$) and 
$\scrB(\cdot)$ denotes the Borel $\sigma$-algebra. % on $\real_+:=(0,\infty)$ 
Write $\xi=\{x_i: \ i\in \mcI^{{\xi}} \}$ where $\mcI^\xi$ is a finite or countable index set
(without loss, taken to be $\{1,\ldots,n\}$ in the finite case and $\IN$ in the infinite case){. Define $|\xi|(r)$ as the number of points of $\xi$ within distance $r$ of the origin}
and let $\{S, S_{i}:\ i\in \mcI^{{\xi}} \}$ be a sequence of independent {and identically distributed} \emph{positive} random variables. 
Set $Y_{i}=g(x_{i})/S_{i}$ and let $N$ be the propagation process generated by the collection
$\{Y_i\}_{i\in\mcI^{{\xi}}}$, that is, $N=\sum_{i\in \mcI^{{\xi}}}\delta_{Y_{i}}$.
%, where
%$\delta_t$ is the Dirac measure at $t$. 
Let {$p^{(x)}(t)=\pr({0<}g(x)/S\le t)$, $x\in \IR^d$,} $p_i(t)={p^{(x_i)}(t)}$, $M(t):=M^{{\xi}}(t):=\sum_{i\in\mcI^{{\xi}}} p_i(t)$ and $Z$ be a Poisson process on ${\pR}$ having mean measure
$M(t)$. (Since $M$ is non-decreasing and non-negative, $Z$ is well-defined.) For any Radon measure $\eta$ on $\pR$ and {$\tau>0$}, we define {$\eta(\tt):=\eta((0,\tt])$ and} $\eta\res\tt$ as 
the Radon measure restricted to the interval $(0,\tt]$.
\end{Setup}

Recall the total variation distance between two probability measures $\nu_1, \nu_2$ on the same measurable space $(\cd, \cf(\cd))$ is defined as
\be{
\dtv(\nu_1, \nu_2)=\sup_{A\in\cb(\cd)}\abs{\nu_1(A)-\nu_2(A)}.
}
Total variation distance is a common and strong metric that bounds the maximum difference in probabilities between two probability distributions. 
In our context, we take $\cd=\ch$, the space of all locally finite point measures on $\pR$ or $\IR^d$ equipped with the vague topology \cite[p.~169]{Kallenberg1983}, and $\cf(\cd)=\cb(\ch)$, the Borel $\sigma$-algebra generated by the vague topology. 

We now present a bound on the total variation distance between {the distributions} of processes {$N\res\tt$ and $Z\res\tt$},
% restricted to the interval $(0,\tt]$, 
where $1/\tt$ can be interpreted as the smallest possible power value of interest for an observer in the network.
Here and below we us $\law(X)$ to denote the distribution of a random element~$X$.

\begin{Theorem}\label{thmdtv}
Assuming the main Setup~\ref{setup}, 
{we have}
\be{
\frac{ 1\wedge M(\tt)^{-1} }{32}  \sum_{i\in\mcI^\xi} p_i(\tt)^2\leq \dtv(\law(Z\res \tt), \law(N\res \tt))\leq \sum_{i\in\mcI^\xi} p_i(\tt)^2\leq M(\tt) \sup_{i\in \mcI^\xi} p_i(\tt).
}
\end{Theorem}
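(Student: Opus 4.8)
The plan is to handle the upper and lower bounds by different means: both upper bounds come from a coupling (a point-process form of Le~Cam's inequality), while the lower bound needs a genuine lower-bound technique, obtained by projecting onto the total point count and quoting the Barbour--Hall estimate. The common first step is to decompose each restricted process into independent single-atom pieces. Writing $Y_i=g(x_i)/S_i$ and $N_i:=\delta_{Y_i}\Ind[0<Y_i\le\tt]$, the independence of $\{S_i\}$ makes $N\res\tt=\sum_{i\in\mcI^\xi}N_i$ a superposition of independent point processes, each carrying at most one atom: $N_i$ is empty with probability $1-p_i(\tt)$ and otherwise places one atom with conditional law $\law(Y_i\mid 0<Y_i\le\tt)$. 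Setting $\mu_i(\cdot):=\pr(Y_i\in\cdot\cap(0,\tt])$, so that $\mu_i(\pR)=p_i(\tt)$ and $\sum_i\mu_i=M\res\tt$, the restriction and superposition theorems for Poisson processes give $Z\res\tt\equald\sum_{i\in\mcI^\xi}Z_i$ with independent $Z_i$ that are Poisson with mean measure $\mu_i$.

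For the upper bound I would estimate each summand and then use subadditivity of total variation under independent superposition. Coupling the atom counts $W_i\sim\mathrm{Ber}(p_i(\tt))$ and $V_i\sim\mathrm{Pois}(p_i(\tt))$ maximally and drawing the conditionally i.i.d.\ atoms from the common normalised measure $\mu_i/p_i(\tt)$, the processes $N_i$ and $Z_i$ coincide whenever $W_i=V_i$, so $\dtv(\law(N_i),\law(Z_i))\le p_i(\tt)\big(1-e^{-p_i(\tt)}\big)\le p_i(\tt)^2$. Running these couplings independently across $i$ and applying a union bound yields $\dtv(\law(N\res\tt),\law(Z\res\tt))\le\sum_{i}\dtv(\law(N_i),\law(Z_i))\le\sum_i p_i(\tt)^2$, and the final displayed inequality is the trivial $\sum_i p_i(\tt)^2\le(\sup_i p_i(\tt))\sum_i p_i(\tt)=M(\tt)\sup_i p_i(\tt)$.

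For the lower bound, coupling only produces upper bounds, so I would instead exploit that total variation cannot increase under a measurable map. Applying the counting functional $\eta\mapsto\eta(\pR)$ gives $\dtv(\law(N\res\tt),\law(Z\res\tt))\ge\dtv\big(\law(N\res\tt(\pR)),\law(Z\res\tt(\pR))\big)$, where $N\res\tt(\pR)=\sum_i\Ind[0<Y_i\le\tt]$ is a sum of independent $\mathrm{Ber}(p_i(\tt))$ variables and $Z\res\tt(\pR)\sim\mathrm{Pois}(M(\tt))$. The classical Barbour--Hall lower bound for Poisson approximation of sums of independent indicators then supplies exactly $\tfrac{1}{32}\big(1\wedge M(\tt)^{-1}\big)\sum_i p_i(\tt)^2$, closing the chain of inequalities.

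I expect the lower bound to be the main obstacle: it is the one place where coupling is of no use and a true lower bound on total variation is required. The substantive content is the Barbour--Hall estimate, which I would either cite or reprove; the care needed is in verifying that its hypotheses hold (independence of the indicators, rate $M(\tt)$) and that its constant $1/32$ and factor $1\wedge M(\tt)^{-1}$ survive the count projection without loss --- which they do precisely because Barbour--Hall already operates at the level of the total count, so that nothing beyond the count functional is needed to realise the claimed bound.
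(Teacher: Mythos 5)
Your proposal is correct and follows essentially the same route as the paper's proof: decompose both restricted processes into independent single-atom components sharing the same conditional atom law, reduce each summand to the Bernoulli--Poisson count comparison (your explicit maximal coupling giving $p_i(\tt)\bigl(1-e^{-p_i(\tt)}\bigr)\le p_i(\tt)^2$ is just Le~Cam's bound, which the paper cites), sum by independence, and obtain the lower bound by projecting onto the total count and invoking the Barbour--Hall estimate for sums of independent indicators against a Poisson of matching mean $M(\tt)$. Nothing in your argument deviates in substance from the paper's, and all steps are sound.
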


%\cyan{\begin{Remark}
%In the definition of total variation distance specialized to Theorem~\ref{thmdtv}, our underlying probability space is $D_{\IR_+}$, the collection of all functions on $\IR_+$ which are right continuous and with left limits,
%equipped with the Skorokhod topology~\cite[Chapter~3]{Billingsley1999} so that $\cg$ is the $\sigma$-algebra generated by the Skorokhod
%topology. A random process on $\IR_+$ is then defined as a measurable mapping from an underlying probability space to $(D_{\IR_+},\cg)$.
%\end{Remark}}

%\begin{Remark}
%The bound is tight in the following sense. If $N(\tt)$ is the number of points of $N$ in $(0,\tt]$ and
%$N$ is close to a Poisson process, then $N(\tt)$ must be close to a Poisson distribution. But
%for $N(\tt)$ to be approximately Poisson, 
%the mean and variance must be close and
%\be{
%\var(N(\tt))=\mean N(\tt) - \sum_{i\in\mcI^\xi} p_i(\tt)^2,
%}
%so the bound of the theorem must be small if the $Z\res \tt$ is to be close to $N\res \tt$.
%The upper bound is on the order of what would be expected if we were only bounding the total variation distance between the \emph{number} of points
%of $N$ falling in the interval $(0,T)$ and the appropriate Poisson variable; see \cite{lecam1960, BarbourHolstJanson1992}. 
%Typically obtaining such a bound for the Poisson process is
%difficult \cite{Aihua's stuff}. 
%\end{Remark}

\begin{Remark}\label{remN2P} Since the total variation distance is preserved under one to one mappings, Theorem~\ref{thmdtv} also implies 
that for $Z'$ the Poisson process with mean measure $M'{[}\tt, \infty)=M(1/\tt)$, and denoting the restriction of $Z'$ to $[\tt, \infty)$ by $Z'|^{\tt}$, 
\be{
\dtv(\law(Z'|^{\tt}), \law(\Pi|^{\tt}))\leq \sum_{i\in\mcI^\xi} p_i(1/\tt)^2\leq  M(1/\tt) \sup_{i\in \mcI^\xi} p_i(1/\tt),
}
where $\Pi$ is the process of signal powers defined in the introduction.
\end{Remark}

\begin{Remark}
Theorem{~\ref{thmdtv}} shows that the main criterion for Poisson process convergence in the wireless network setting is that,
in the notation of the theorem, 
{$\sup_{i\in \mcI^\xi} p_i(\tt)$ is small.}
The other criterion is the convergence of the mean measure, though the theorem still technically applies even if $M(t)$ is infinite or zero.
\end{Remark}

Theorem~\ref{thmdtv} is an approximation theorem, but we can use it to prove convergence results in terms of the vague topology, denoted by $\convd$;
note that such convergence implies convergence of natural statistics of the process, e.g., point counts for finite collections of {relatively compact} Borel sets
    and random variables that are integrals against the point measure of continuous and compactly supported functions $f: \pR\to \R_+$
\cite[Theorem~4.2]{Kallenberg1983}.
We have the following corollary of Theorem~\ref{thmdtv}.

\begin{Corollary} \label{corollary1}
For each $n$, let $\xi_n=\{x_{ni}\}_{i\in\mcI_n}$, $\mcI_n:=\mcI^{\xi_n}$, $S_{ni}$ and $Y_{ni}$ satisfy the main Setup~\ref{setup},
and define $N\t n$ as the process generated by the $\{Y_{ni}\}_{i\in \mcI_n}$. 
If for all {$t\in C(L)$},  
\be{
\lim_{n\to\infty}\sup_{i\in\mcI_n} \pr({0<}Y_{ni}\leq t)= 0
\mbox{ and }
\lim_{n\to\infty}\mean N\t n(t)= L(t),}
%\label{X01}
%\sum_{i\in\mcI_n} \pr(Y_{ni}\leq t)^2\to 0\label{X01}
%and $M\t n(t):=\mean N\t n([0,t])\to M(t)$ , 
then $N\t n \convd Z^{{L}}$, %as $n\to\infty$, 
where $Z^{{L}}$ is a Poisson process on $\pR$ with mean measure
$L$.
\end{Corollary}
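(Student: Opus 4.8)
The plan is to fix an observation window $(0,\tt]$, approximate $N\t n\res\tt$ by a Poisson process using Theorem~\ref{thmdtv}, and then let the window exhaust $\pR$. For each $n$ write $M\t n(t):=\sum_{i\in\mcI_n}\pr(0<Y_{ni}\le t)$; this is exactly $\mean N\t n(t)$. Let $Z\t n$ denote the Poisson process on $\pR$ with mean measure $M\t n$, as in Setup~\ref{setup}. Because the discontinuity set of the nondecreasing limit $L$ is at most countable, $C(L)$ is dense, so it suffices to establish vague convergence by testing against Laplace functionals $\mean\exp(-\int f\,dN\t n)$ for $f\in C_K^+(\pR)$ whose support lies in some $(0,\tt]$ with $\tt\in C(L)$. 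For such $f$ the value $\int f\,dN\t n$ depends only on $N\t n\res\tt$, which is the reduction that lets us invoke the approximation theorem.

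The first ingredient is that the Poisson approximation error vanishes. Applying Theorem~\ref{thmdtv} on $(0,\tt]$ gives
\[
\dtv\big(\law(Z\t n\res\tt),\,\law(N\t n\res\tt)\big)\le M\t n(\tt)\,\sup_{i\in\mcI_n}\pr(0<Y_{ni}\le\tt).
\]
By hypothesis $M\t n(\tt)=\mean N\t n(\tt)\to L(\tt)<\infty$ (here we use $\tt\in C(L)$ and that $L$ is Radon), while $\sup_{i\in\mcI_n}\pr(0<Y_{ni}\le\tt)\to0$; hence the right-hand side tends to $0$. The second ingredient is that $Z\t n\res\tt\convd Z^L\res\tt$. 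This is the standard fact that a sequence of Poisson processes converges in distribution as soon as its mean measures converge vaguely: pointwise convergence $M\t n(t)\to L(t)$ at every $t\in C(L)\cap(0,\tt]$ yields vague convergence of $M\t n\res\tt$ to $L\res\tt$, and the map from a Radon mean measure to the law of the corresponding Poisson process is vaguely continuous. Combining the two ingredients via the elementary observation that $\dtv(\law(A_n),\law(B_n))\to0$ together with $B_n\convd C$ forces $A_n\convd C$, we obtain $N\t n\res\tt\convd Z^L\res\tt$.

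Finally, I would stitch the windows together. For $f\in C_K^+(\pR)$ with support in $(0,\tt]$, $\tt\in C(L)$, the functional $\eta\mapsto\exp(-\int f\,d\eta)$ is bounded and vaguely continuous on point measures of $(0,\tt]$, so
\[
\mean\exp\Big(-\int f\,dN\t n\Big)=\mean\exp\Big(-\int f\,d(N\t n\res\tt)\Big)\longrightarrow\mean\exp\Big(-\int f\,d(Z^L\res\tt)\Big)=\mean\exp\Big(-\int f\,dZ^L\Big).
\]
Since such $f$ determine vague convergence on $\pR$, this yields $N\t n\convd Z^L$.

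I expect the main obstacle to be the bookkeeping of the vague topology near the window boundary: one must choose $\tt\in C(L)$ so that no limiting mass sits exactly at $\tt$, confirm that restriction to $(0,\tt]$ is a vaguely continuous operation on the relevant configurations, and check that convergence of the truncated processes along a cofinal family $\tt\to\infty$ genuinely upgrades to vague convergence on all of $\pR$. The probabilistic content, namely the vanishing of the total variation bound, is immediate from Theorem~\ref{thmdtv} and the two hypotheses.
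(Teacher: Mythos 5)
Your proposal is correct, and its skeleton matches the paper's proof: both pass through the intermediate Poisson process $Z\t n$ with mean measure $M\t n=\mean N\t n$, kill the $N\t n$-to-$Z\t n$ discrepancy on a window $(0,\tt]$ with $\tt\in C(L)$ via the total variation bound of Theorem~\ref{thmdtv} (vanishing because $M\t n(\tt)\to L(\tt)<\infty$ while $\sup_{i\in\mcI_n}\pr(0<Y_{ni}\le\tt)\to0$), and then exhaust $\pR$ along windows with endpoints in the dense set $C(L)$ --- a step the paper isolates as Lemma~\ref{lemPPPcon} and which you in effect reprove directly with Laplace functionals. Where you genuinely diverge is the middle comparison of $Z\t n\res\tt$ with $Z^L\res\tt$: the paper works with the Barbour--Brown Wasserstein metrics $d_{1t}$ and $d_{2t}$ of \cite{BarbourBrown1992}, invokes the Brown--Xia estimate \cite[Theorem~1.5]{BrownXia1995} to bound $d_{2t}(\law(Z\t n\res t),\law(Z^L\res t))$ by $d_{1t}(M\t n\res t/M\t n(t),L\res t/L(t))+|M\t n(t)-L(t)|$, and uses that these metrics quantify the vague topology; you instead use the soft fact that the Poisson law is continuous in its intensity, which is immediate from the Laplace functional $\exp(-\int(1-e^{-f})\,d\mu)$ together with the equivalence of convergence in distribution and convergence of Laplace functionals \cite[Theorem~4.2]{Kallenberg1983}, since pointwise convergence $M\t n(t)\to L(t)$ at continuity points yields vague convergence of the mean measures. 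Your route is more elementary --- it requires none of the Stein-method metric machinery --- while the paper's route keeps every step quantitative, yielding an explicit $d_{2t}$ rate in terms of $\sum_i\pr(Y_{ni}\le t)^2$, $d_{1t}(M\t n\res t/M\t n(t),L\res t/L(t))$ and $|M\t n(t)-L(t)|$, information your qualitative argument discards. The boundary bookkeeping you flag is handled correctly: choosing $\tt\in C(L)$ and testing only against $f$ with compact support in $(0,\tt)$ avoids any mass at the window edge, and Kallenberg's criterion upgrades the windowed convergence to vague convergence on all of $\pR$, exactly as Lemma~\ref{lemPPPcon} does in the paper.
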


%\begin{Remark}\label{remcor1}
%The conclusion of the theorem holds assuming $\sup_{i\in\mcI_n} \pr(Y_{ni}\leq t)\to 0$ rather than~\eq{X01} since
%\be{
%\sum_{i\in\mcI_n} \pr(Y_{ni}\leq t)^2\leq M\t n(t) \sup_{i\in\mcI_n} \pr(Y_{ni}\leq t),
%}
%and we're assuming $M\t n(t)\to M(t)$.
%\end{Remark}

%
%\begin{proof}
%For each $t\in\real_+$, $\sup_{i\in I_n}\pr(Y_{ni}\le t)=\sup_{i\in I_n}\pr(S_{ni}\ge g(x_{ni})/t)\to 0$ as $n\to \infty$, hence $N_n$ can be viewed as the superposition of independent sparse processes and the claim follows from Grigelionis~(1963).
%\end{proof}
%
% 
 The caveat to Theorem~\ref{thmdtv} and Corollary~\ref{corollary1} is that
 the mean measure of the processes may be difficult to compute in practice. The 
 following
 proposition provides alternative formulas
 for the mean measure. {Recall that for a non-decreasing function $h$
 we define the inverse $h^{-1}(y)=\inf\{x:h(x)>y\}$.}
 
\begin{Proposition}\label{propmean}
Recall the notation of the main Setup~\ref{setup} and assume %$\cs=\IR^d$, 
%${\xi=\sum_{i\in\mcI^{{\xi}}}\delta_{x_i}}$ is a locally finite point measure {on $\cs$} and 
$g(x)=h(\abs{x})$ for some function~$h$. 
%Let ${\xi}(r)$ be the number of points of $
%{\xi}$ within distance $r$ of the origin.
Then 
\ben{
M(t):=M^{{\xi}}(t):=\sum_{i\in\mcI^{{\xi}}} p_i(t) =\int_{0}^\infty \pr\left({0<}\frac{h(r)}{S}\leq t\right) {|\xi|}(dr). \label{meanform}
} 
If in addition, $h$ is {positive on $\pR$}, {left continuous and nondecreasing with inverse $h^{-1}$}, then
\ben{\label{meanform2}
M(t)=\mean \left[{|\xi|}(h^{-1}(St))\right].
}
\end{Proposition}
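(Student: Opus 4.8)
The plan is to establish \eqref{meanform} by a direct translation of the sum $M(t)=\sum_i p_i(t)$ into an integral against the radial counting measure, and then to deduce \eqref{meanform2} from \eqref{meanform} by inverting $h$ and applying Tonelli's theorem. For the first formula I would start from $g(x_i)=h(|x_i|)$, which gives $p_i(t)=\pr(0<h(|x_i|)/S\le t)$, and regard $|\xi|=\sum_{i\in\mcI^\xi}\delta_{|x_i|}$ as a counting measure on $(0,\infty)$. Then $\sum_i p_i(t)$ is by definition the integral of the (for fixed $t$, deterministic) function $r\mapsto \pr(0<h(r)/S\le t)$ against $|\xi|(dr)$, which is precisely \eqref{meanform}. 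This step is only a matter of unwinding definitions and carries no analytic content.

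For \eqref{meanform2} I would first use that $S>0$ and $h(r)>0$ for $r>0$ to rewrite the integrand as $\pr(0<h(r)/S\le t)=\pr(h(r)\le St)$. The heart of the argument is then the Galois-type equivalence
\be{
h(r)\le y \iff r\le h^{-1}(y),\qquad r>0,\ y\ge 0,
}
valid whenever $h$ is nondecreasing, left continuous, and $h^{-1}(y)=\inf\{x:h(x)>y\}$. I would prove the forward direction from monotonicity alone: if $h(r)\le y$, then every $x$ with $h(x)>y$ must exceed $r$, so $r\le\inf\{x:h(x)>y\}=h^{-1}(y)$. The reverse direction is where the hypotheses on $h$ are genuinely needed: for $r\le h^{-1}(y)$ every $s<r$ lies below the infimum and hence satisfies $h(s)\le y$, and letting $s\uparrow r$ and invoking left continuity yields $h(r)\le y$. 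The degenerate case $\{x:h(x)>y\}=\emptyset$, giving $h^{-1}(y)=+\infty$, is consistent with both directions.

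Granting the equivalence, $\pr(h(r)\le St)=\pr(r\le h^{-1}(St))$, and \eqref{meanform} becomes
\be{
M(t)=\int_0^\infty\pr\bigl(r\le h^{-1}(St)\bigr)\,|\xi|(dr)=\mean\left[\int_0^\infty\Ind\{r\le h^{-1}(St)\}\,|\xi|(dr)\right]=\mean\bigl[|\xi|(h^{-1}(St))\bigr],
}
where the middle step is Tonelli's theorem, applicable because the integrand is nonnegative so no integrability hypothesis is needed, and the last step uses that $\int_0^\infty\Ind\{r\le R\}\,|\xi|(dr)$ counts the points of $\xi$ within distance $R$, i.e.\ equals $|\xi|(R)$. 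I expect the only delicate point to be the reverse direction of the Galois equivalence together with its behaviour at the boundary $r=h^{-1}(y)$, since this is exactly where left continuity and the strict ``$>$'' convention in the definition of $h^{-1}$ are indispensable; the remainder is bookkeeping and a routine application of Tonelli.
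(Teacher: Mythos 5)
Your proof is correct and follows essentially the same route as the paper's: both parts reduce to viewing $|\xi|$ as the radial counting measure and then applying the inversion equivalence $h(r)\le y \iff r\le h^{-1}(y)$ with an exchange of expectation and (sum/)integral. The only difference is one of detail: the paper states that equivalence without proof, whereas you correctly derive both directions (monotonicity for the forward implication, left continuity for the reverse) and make the Tonelli step explicit.
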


%\begin{Theorem}\label{thmshadow}
%Let $(S(\sigma))_{\sigma\geq0}$ be a family of positive random variables indexed by some non-negative parameter~$\sigma$, $N\t \sigma$ be the propagation 
%process generated by $S(\sigma)$ and $\xi$, and 
%let $Z\t \sigma$ be the Poisson process with
%mean measure $M\t \sigma=\mean\left[  \xi\left( h^{-1}\left(S(\sigma) t\right)\right)\right]$.
%If $x_1$ is the point of $\xi$ closest to the origin,
%then 
%\be{
%\dtv(\law(N\t \sigma|_{\tt}), \law(Z\t \sigma|_{\tt})) \leq M\t \sigma(t) \pr\left(S(\sigma)\geq \frac{g(x_1)}{\tt}\right).
%}
%Further, if there is a positive non-decreasing function $D$ such that
%\be{
%\lim_{r\to\infty} \frac{\xi(r)}{D(r)}=1 \,\,\,\,\mbox{ and } \,\,\,\,\, \lim_{r\to0} D(r)=0.
%}
%%for any $\delta>0$,
%\be{
%(i)\ S(\sigma)\convp0\,\,\,\, \mbox{ and } \,\,\,  (ii)\ \mean D( h^{-1}(S(\sigma) t)) \to M(t),
%}
%as $\sigma\to\infty$, then $N\t \sigma$ converges weakly to a Poisson process with mean measure $M(t)$.
%\end{Theorem}

\subsection{Random transmitter positions}

If the transmitters are placed according to a random process {$\Xi$} independent of the fading sequence,
then Theorem~\ref{thmdtv} holds conditionally, but the mean measure 
of the approximating Poisson process may change for different realizations of $\Xi$
and so in general we approximate $N$ by a Cox process, which is a Poisson process with a random intensity measure. Note that even in this case, each user sees a Poisson process, but the intensity measures of users in different locations and at different times may be different.

We change the main Setup~\ref{setup} by replacing $\xi$ with a process $\Xi$. {According to \cite[Lemma~2.3]{Kallenberg1983}, we can write 
$\Xi=\sum_{i\in \mcI^\Xi}\delta_{X_i}$
%\note{@Aihua: maybe better to write $\Xi=\sum_{i\in\mcI^\Xi}\delta_{X_i}$ or $\Xi=\sum_{1\le i\le |\mcI^\Xi|}\delta_{X_i}$?}
 with (possibly random) index set $\mcI^\Xi$.}  %; that is $\Xi=\sum_{1\le i\le \mcI^\Xi}\delta_{X_i}$. 
Again, we let   
$\{S,S_i:\ i\in\IN\}$ be a sequence of positive i.i.d.\ random variables
independent of $\Xi$ 
and we 
define $N:=N^\Xi$ as in the main Setup~\ref{setup} {but with $\Xi$ replacing $\xi$}.

%We change the main Setup~\ref{setup} so the (finite or countable) index set
%$\mcI$ is now random, though again we take it to be $\{1,\ldots, n\}$ in the finite case 
%or the positive integers in the infinite case. Formally, we let $(S_1,S_2,\ldots)$ be a sequence of i.i.d.\ variables
%independent of the random set $\Xi=\{x_1,x_2,\ldots\}$ (labeled arbitrarily) indexed by $\mcI^\Xi$ 
%and we 
%define $N$ conditional on $\Xi$ as in the main Setup~\ref{setup}.

Before stating our Cox process approximation result, we cover the important case 
where $\Xi$ is approximately a Poisson process, in which case Poisson process approximation is valid.
\begin{Theorem}\label{thmPPP}
%Recall the main Setup~\ref{setup}, but now let $\Xi$ be random and independent of $\{S_i\}_{i \in \IN}$.
With the setup above,
let  $\Theta$ be a Poisson process on $\IR^d$ 
%$\cs$
independent of $\{S_i:i \in \IN\}$ and define 
$Z=\sum_{{\theta_i}\in\Theta} \delta_{g({\theta_i})/S_i}$,
%$Z=\{g(\theta_i)/S_i: \theta_i\in \Theta\}$, 
{as} a process on ${\pR}$.
% with mean measure $M(t)= \mean \sum_{i\in\mcI^\Xi} P(Y_i\leq t | \Xi)$.
Then
$Z$ is a Poisson process with mean measure 
\[
M(t)=\E Z(t)=\mean \int \pr({0<}g(\theta)/S\leq t)\Theta(d\theta)
\]
 and \be{
\dtv(\law(Z\res \tt), \law(N\res \tt))\leq \dtv(\law(\Theta), \law(\Xi)).
}
\end{Theorem}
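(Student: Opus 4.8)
The plan is to treat the two assertions separately: first identify $Z$ as a Poisson process with the stated mean measure, then establish the total variation bound by a coupling (equivalently, data-processing) argument. For the first assertion I would build $Z$ from $\Theta$ in two standard steps. Since $\Theta$ is Poisson on $\IR^d$ with some intensity measure $\mu$ and the marks $\{S_i\}$ are i.i.d.\ and independent of $\Theta$, the marking theorem gives that the marked process $\sum_i\delta_{(\theta_i,S_i)}$ is Poisson on $\IR^d\times\pR$ with intensity $\mu\otimes\law(S)$. Applying the measurable map $(\theta,s)\mapsto g(\theta)/s$ and keeping only the points in $\pR$, the mapping theorem yields that $Z$ is Poisson, provided the image intensity is locally finite on $\pR$ (this is exactly the standing assumption guaranteeing that $N$ and $Z$ are locally finite). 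Its mean measure is the pushforward of $\mu\otimes\law(S)$, namely
\be{
M(t)=(\mu\otimes\law(S))\big(\{(\theta,s):0<g(\theta)/s\le t\}\big)=\int\pr(0<g(\theta)/S\le t)\,\mu(d\theta),
}
and Campbell's formula $\mean\int f\,d\Theta=\int f\,d\mu$ rewrites the right-hand side as $\mean\int\pr(0<g(\theta)/S\le t)\,\Theta(d\theta)$, as claimed.

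For the bound, the key observation is that $N$ and $Z$ arise from $\Xi$ and $\Theta$ through one and the same randomization. Define the probability kernel $\mathcal{R}$ sending a configuration $\phi=\sum_i\delta_{x_i}$ on $\IR^d$ to the random configuration $\sum_i\delta_{g(x_i)/S_i}$ on $\pR$ obtained by attaching fresh i.i.d.\ fading; using the measurable enumeration of \cite[Lemma~2.3]{Kallenberg1983} and the exchangeability of i.i.d.\ marks, $\mathcal{R}$ is well-defined independently of the enumeration, and $\law(N)=\law(\Xi)\mathcal{R}$, $\law(Z)=\law(\Theta)\mathcal{R}$. I would then invoke the contraction of total variation under a common kernel via coupling. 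Take a maximal coupling $(\hat\Xi,\hat\Theta)$ with $\pr(\hat\Xi\ne\hat\Theta)=\dtv(\law(\Xi),\law(\Theta))$ (such a coupling exists since the space $\ch$ with the vague topology is Polish), generate one i.i.d.\ fading sequence independent of it, and on the event $\{\hat\Xi=\hat\Theta\}$ attach the same fading to the common points, so that the resulting processes satisfy $\hat N\equald N$, $\hat Z\equald Z$ and $\hat N=\hat Z$ on that event. Hence $\dtv(\law(N),\law(Z))\le\pr(\hat N\ne\hat Z)\le\pr(\hat\Xi\ne\hat\Theta)=\dtv(\law(\Xi),\law(\Theta))$. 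Finally, restriction to $(0,\tt]$ is a deterministic measurable map, under which total variation cannot increase, giving $\dtv(\law(Z\res\tt),\law(N\res\tt))\le\dtv(\law(N),\law(Z))$.

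I expect the only delicate point to be the bookkeeping behind $\mathcal{R}$: one must check it is genuinely enumeration-invariant (so that $\law(N),\law(Z)$ depend only on $\law(\Xi),\law(\Theta)$ and not on the possibly random index sets $\mcI^\Xi,\mcI^\Theta$), and that on $\{\hat\Xi=\hat\Theta\}$ the two outputs can be made literally equal by a measurable matching of points; both follow from the measurable enumeration of locally finite configurations, but they are what make the coupling rigorous. A clean alternative that sidesteps the explicit coupling is to write, for a measurable set $A$ of configurations on $\pR$, $\pr(N\res\tt\in A)-\pr(Z\res\tt\in A)=\int F(\phi)\,(\law(\Xi)-\law(\Theta))(d\phi)$ with $F(\phi):=\pr(\mathcal{R}(\phi)\res\tt\in A)\in[0,1]$, and to bound this by $\sup_{0\le F\le1}|\int F\,d(\law(\Xi)-\law(\Theta))|=\dtv(\law(\Xi),\law(\Theta))$.
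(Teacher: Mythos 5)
Your proposal is correct and follows essentially the same route as the paper's (very terse) proof: marking plus mapping theorems for the Poisson property, Campbell's formula for the mean measure, and a coupling in which $\hat\Xi=\hat\Theta$ forces $\hat N=\hat Z$, so that $\dtv(\law(Z\res\tt),\law(N\res\tt))\le\pr(\hat\Xi\ne\hat\Theta)=\dtv(\law(\Xi),\law(\Theta))$. Your additional care about the enumeration-invariance of the randomization kernel, and the alternative data-processing bound via $F(\phi)=\pr(\mathcal{R}(\phi)\res\tt\in A)$, supply rigor the paper leaves implicit but do not change the argument.
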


Theorem~\ref{thmPPP} shows that if $\Xi$ is close to \emph{some} Poisson process, then $N$ is close to a Poisson process.
In general, random $\Xi$ give rise to Cox processes which are Poisson processes with random mean measures. More precisely,
we say that $Z$ is a Cox process directed by the random measure $M$ if 
conditional on $M$, $Z$ is a Poisson process with mean measure $M$. 
We have the following result.
%To state our next result, let
%\be{
%M^\Xi(t)= \sum_{i\in\mcI^\Xi} \pr(Y_i\leq t  | \Xi ).
%}

\begin{Theorem}\label{thmdtv2}
Recall the main Setup~\ref{setup}, but with $\xi$ replaced by a locally finite process $\Xi$ independent of $\{S_i\}_{i \in \IN}$. 
Define
\be{
%p_i^\Xi (t)=\pr(Y_i\leq t | \Xi), \,\,\,\,\,\, M^\Xi(t)=\sum_{i\in \mcI^{\Xi}} p_i^\Xi(t).
{M^\Xi(t)=\int_{\IR^d}p^{(x)}(t)\Xi(dx).}
}
Let $Z$ be the Cox process 
directed by the measure $M^\Xi$, that is, conditional on $\Xi$, $Z$ is a Poisson process with mean measure $M^\Xi$.
Then
\be{ %\label{ranpnbd}
\dtv(\law(Z\res \tt), \law(N\res \tt))\leq  \mean {\int_{\IR^d}p^{(x)}({\tau})^2\Xi(dx)}
%\sum_{i\in\mcI^\Xi} p_i^\Xi(\tt)^2.
%\leq M(\tt) \inf_{i\in \mcI} p_i(\tt).
}
\end{Theorem}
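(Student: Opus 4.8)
The plan is to condition on the transmitter process $\Xi$ and reduce to the deterministic estimate already established in Theorem~\ref{thmdtv}. Fix a realization $\Xi=\xi$, a locally finite configuration in $\IR^d/\{0\}$. Since the fading sequence $\{S_i\}$ is independent of $\Xi$, the conditional law of $N\res\tt$ given $\Xi=\xi$ is exactly the law of the restricted propagation process generated by $\xi$, $g$ and $S$ in the sense of the main Setup~\ref{setup}. By the definition of the Cox process, the conditional law of $Z\res\tt$ given $\Xi=\xi$ is that of the restriction to $(0,\tt]$ of a Poisson process with mean measure $M^\xi(t)=\int_{\IR^d}p^{(x)}(t)\,\xi(dx)=\sum_{i\in\mcI^\xi}p_i(t)$. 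These are precisely the two laws compared in Theorem~\ref{thmdtv}, so its upper bound gives, for almost every realization,
\be{\dtv\bigl(\law(Z\res\tt\mid\Xi),\law(N\res\tt\mid\Xi)\bigr)\le\int_{\IR^d}p^{(x)}(\tt)^2\,\Xi(dx).}

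Next I would pass from this conditional estimate to the unconditional one. Because $Z$ is directed by $M^\Xi$ and $N$ is built from the same $\Xi$, both $\law(Z\res\tt)$ and $\law(N\res\tt)$ are mixtures over the common mixing distribution $\law(\Xi)$, and for such mixtures the total variation distance obeys the convexity bound: for every measurable $A$,
\be{\abs{\Prob(Z\res\tt\in A)-\Prob(N\res\tt\in A)}=\abs{\mean\bigl[\Prob(Z\res\tt\in A\mid\Xi)-\Prob(N\res\tt\in A\mid\Xi)\bigr]}\le\mean\,\dtv\bigl(\law(Z\res\tt\mid\Xi),\law(N\res\tt\mid\Xi)\bigr).}
Taking the supremum over $A$ and inserting the conditional bound yields
\be{\dtv(\law(Z\res\tt),\law(N\res\tt))\le\mean\int_{\IR^d}p^{(x)}(\tt)^2\,\Xi(dx),}
which is the claim. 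Equivalently, one may lift the coupling that underlies the upper bound in Theorem~\ref{thmdtv}: conditional on $\Xi$ it furnishes a coupling of $Z\res\tt$ and $N\res\tt$ that disagrees with probability at most $\int p^{(x)}(\tt)^2\,\Xi(dx)$, and integrating this coupling over $\law(\Xi)$ produces a coupling of the unconditional processes with the stated disagreement probability.

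The main technical point to handle carefully is measurability: to give meaning to the expectation over $\Xi$ one needs the regular conditional distributions $\law(Z\res\tt\mid\Xi=\xi)$ and $\law(N\res\tt\mid\Xi=\xi)$, together with the induced map sending $\xi$ to the conditional total variation distance, to be measurable in the configuration $\xi$. This is where the coupling formulation is convenient, since a measurable disintegration of the coupling from Theorem~\ref{thmdtv} automatically yields a measurable family; alternatively one invokes the standard existence of regular conditional probabilities on the Polish space $\ch$ of locally finite point measures. With that in place, the conditioning and convexity arguments above are routine, and the substantive content is entirely carried by the deterministic estimate of Theorem~\ref{thmdtv}.
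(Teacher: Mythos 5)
Your proposal is correct and takes essentially the same route as the paper: condition on $\Xi$, apply the upper bound of Theorem~\ref{thmdtv} to the conditional laws of $Z\res\tt$ and $N\res\tt$, and then remove the conditioning via the inequality $\dtv(\law(X),\law(Y))\le\mean\,\dtv(\law(X\,|\,W),\law(Y\,|\,W))$, which the paper cites (R\"ollin and Ross) and you re-derive from the mixture representation. Your added remarks on measurability and the coupling lift are sound but supplementary; the substantive argument coincides with the paper's.
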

The following conditional and unconditional analog of Proposition~\ref{propmean} holds for random $\xi$; we omit the proof because it's straightforward from Proposition~\ref{propmean}.
\begin{Proposition}\label{propmean2}
Recall the notation of the main Setup~\ref{setup} with $\xi$ replaced by a locally finite process $\Xi$ independent of $S_1,S_2,\ldots$ 
and assume $g(x)=h(\abs{x})$ for some function $h$. Then
%Let $\Xi(r)$ be the number of points of $\Xi$ within distance $r$ of the origin, then 
\ben{
M^\Xi(t):={\int_{\IR^d}p^{(x)}(t)\Xi(dx)}%\sum_{i\in\mcI^\Xi} \pr(Y_i\leq t \big| \Xi)
 =\int_{0}^\infty \pr\left({0<}\frac{h(r)}{S}\leq t\right) {|\Xi|}(dr) \label{meanforma}
} 
and 
\be{
M(t):={\int_{\IR^d}p^{(x)}(t)\Lambda(dx)}%\sum_{i\in\mcI^\Xi} \pr(Y_i\leq t) 
=\mean \int_{0}^\infty \pr\left({0<}\frac{h(r)}{S}\leq t\right) {|\Xi|}(dr), \label{meanformb}
}
{where $\Lambda$ is the mean measure of $\Xi$.}
If in addition, $h$ is {positive on $\pR$,} {left continuous and nondecreasing with inverse $h^{-1}$}, then
\ben{\label{meanform2a}
M^\Xi(t)=\mean \left[{|\Xi|}(h^{-1}(St)) \big| \Xi \right]
}
and
\ben{\label{meanform2b}
M(t)=\mean \left[{|\Xi|}(h^{-1}(St)) \right]{=\mean \left[|\Lambda|(h^{-1}(St))\right]},
} 
{where $|\Lambda|(r):=\mean |\Xi|(r)$.}
\end{Proposition}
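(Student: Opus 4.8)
The plan is to obtain the two conditional identities by applying Proposition~\ref{propmean} pathwise to each realization of $\Xi$, and then to pass to the unconditional identities by integrating over $\Xi$. Since the fading sequence $\{S,S_i\}$ is independent of $\Xi$, conditioning on the event $\{\Xi=\xi\}$ leaves the law of $S$ unchanged, so for each fixed locally finite realization $\xi$ the hypotheses of Proposition~\ref{propmean} are met and its conclusions \eqref{meanform} and \eqref{meanform2} apply verbatim. Concretely, because $g(x)=h(\abs x)$ the integrand $p^{(x)}(t)=\pr(0<h(\abs x)/S\le t)$ depends on $x$ only through $\abs x$; writing $q(r):=\pr(0<h(r)/S\le t)$, the quantity $M^\Xi(t)=\int_{\IR^d}p^{(x)}(t)\,\Xi(dx)$ is the integral of the radial function $q$ against the pushforward of $\Xi$ under $x\mapsto\abs x$, which by definition is $|\Xi|$. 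This yields \eqref{meanforma} as a pathwise identity, valid for every realization and requiring nothing beyond $g(x)=h(\abs x)$.

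For the conditional identity \eqref{meanform2a} I would reuse the key equivalence established in the proof of Proposition~\ref{propmean}: when $h$ is positive, left continuous and nondecreasing, $h(r)\le St\iff r\le h^{-1}(St)$, so $q(r)=\pr(r\le h^{-1}(St))=\mean\mathbf{1}(r\le h^{-1}(St))$. Conditioning on $\Xi$ (which, by independence, does not alter the law of $S$) and applying Tonelli's theorem to the nonnegative integrand gives
\be{
M^\Xi(t)=\int_0^\infty \mean\!\big[\mathbf{1}(r\le h^{-1}(St))\,\big|\,\Xi\big]\,|\Xi|(dr)=\mean\!\big[\,|\Xi|(h^{-1}(St))\,\big|\,\Xi\big],
}
which is \eqref{meanform2a}.

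The two unconditional identities then follow by taking expectations. Using the definition of the mean (intensity) measure $\Lambda=\mean\Xi$, one has $\mean\int_{\IR^d}p^{(x)}(t)\,\Xi(dx)=\int_{\IR^d}p^{(x)}(t)\,\Lambda(dx)$ for the nonnegative kernel $p^{(x)}(t)$, which combined with \eqref{meanforma} gives the unconditional formula for $M(t)$. For \eqref{meanform2b} the tower property applied to \eqref{meanform2a} yields $M(t)=\mean M^\Xi(t)=\mean[\,|\Xi|(h^{-1}(St))\,]$; conditioning this last expectation on $S$ instead, using that $h^{-1}(St)$ is then a fixed radius, the independence of $\Xi$ and $S$, and the definition $|\Lambda|(r):=\mean|\Xi|(r)$, gives $\mean[\,|\Xi|(h^{-1}(St))\,]=\mean[\,|\Lambda|(h^{-1}(St))\,]$.

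The calculation is entirely routine; the only points needing care are the measurability and Tonelli justifications — that $(\omega,r)\mapsto\mathbf{1}(r\le h^{-1}(St(\omega)))$ is jointly measurable and that all integrands are nonnegative, so no integrability hypotheses are required — together with the legitimacy of conditioning on $\Xi$, which is immediate from its independence of the fading sequence. The substantive inverse relation $h(r)\le y\iff r\le h^{-1}(y)$ is inherited directly from Proposition~\ref{propmean}, so I expect no genuine obstacle here.
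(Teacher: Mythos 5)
Your proof is correct and follows exactly the route the paper intends: the authors omit the proof, stating it is ``straightforward from Proposition~\ref{propmean},'' and your argument---conditioning on $\Xi$ (legitimate by independence from the fading sequence), applying Proposition~\ref{propmean} pathwise including the key equivalence $h(r)\le y \iff r\le h^{-1}(y)$, and then taking expectations via the tower property and the definition of the mean measure $\Lambda$---is precisely that straightforward deduction. Your extra care with Tonelli, joint measurability, and the final conditioning on $S$ to obtain $\mean\left[|\Lambda|(h^{-1}(St))\right]$ fills in the details correctly.
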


\subsection{Poisson versus Cox} 

When the transmitters are randomly placed according to a process $\Xi$, it's possible for the propagation process $N$ 
to be close to a Poisson process (Theorem~\ref{thmPPP}) or Cox process with non-deterministic {mean} measure (Theorem~\ref{thmdtv2}). 
There are realistic situations where we expect {the propagation process to be close to} a Cox process, for example, if the fading distribution $S$ is a mixture of distributions 
(see Section~\ref{sec:compo} for an example) or the observer initially
connects in a random way to one of a number of different networks that have different transmitter coverage densities or fading distributions
(see the example (iii) below for a toy model). However, it's of interest to understand when 
a Poisson, rather than Cox, process is appropriate. Theorem~\ref{thmdtv2} suggests that if $M^{\Xi}(t)$ is
close to deterministic, then $N$ may be close to Poisson. 
The next result is a convergence version of this statement with easily checkable conditions.

\begin{Theorem} \label{thm2} 
Assume that $\Xi$ is a process on $\IR^d$ with a locally finite mean measure $\Lambda$ such that 
%for ${|\Lambda|}(r):=\E{|\Xi|}(r)$, 
$\lim_{r\downarrow 0}{|\Lambda|}(r)=0$ and as $r\to\infty$,
\ben{
{|\Lambda|}(r)\to \infty, \,\,\,\,\,\, \Var({|\Xi|}(r))/({|\Lambda|}(r))^2\to0.\label{2014110901}
}
Let $(S(\sigma))_{\sigma\geq0}$ be a family of positive random variables, $N\t \sigma$ be the propagation process generated by $S(\sigma),$ $g$ and $\Xi$.
Assume $g(x)=h(|x|)$, where $h$ is {left continuous, nondecreasing and positive} on $\pR$.
If
\be{
(i)\  S(\sigma) \convp 0  \,\,\,\,\,\, \mbox{ and } \,\,\,\,\,\,  (ii)\  \int_{\IR^d} \pr\left({0<}\frac{g(x)}{S(\sigma)}\le t\right)\Lambda(dx)\to L(t)\mbox{ for all }t\in C(L)%\mean \Lambda( h^{-1}(S(\sigma) t)) \to L(t) 
}
as $\sigma\to\infty$,
then
$N\t \sigma$ converges weakly to a Poisson process {$Z^L$} with mean measure $L$.
\end{Theorem}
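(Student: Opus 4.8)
The plan is to prove the theorem in two stages: first replace $N\t\sigma$ by a Cox process whose directing measure is the (random) conditional mean measure $M^\Xi$, using the total variation bound of Theorem~\ref{thmdtv2}; then show that this Cox process converges weakly to the Poisson process $Z^L$ by showing that its directing random measure concentrates on the deterministic limit $L$. Concretely, let $Z\t\sigma$ be the Cox process directed by $M^\Xi(t)=\int_{\IR^d}p^{(x)}(t)\,\Xi(dx)$, with $p^{(x)}$ computed from $S(\sigma)$. Theorem~\ref{thmdtv2} gives $\dtv(\law(Z\t\sigma\res\tau),\law(N\t\sigma\res\tau))\le\int_{\IR^d}(p^{(x)}(\tau))^2\,\Lambda(dx)$, so it suffices to (a) show the right-hand side tends to $0$ for $\tau\in C(L)$ and (b) show $Z\t\sigma\convd Z^L$. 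Since the test functions for vague convergence on $\pR$ have compact support bounded away from $0$, and since restriction to $(0,\tau]$ is a.s.\ continuous when $\tau\in C(L)$, establishing (a) and (b) for a dense set of $\tau\in C(L)$ yields $N\t\sigma\convd Z^L$ (bound the first term of $\mean F(N\t\sigma\res\tau)-\mean F(Z^L\res\tau)$ by the total variation distance and the second by weak convergence).

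For (a) I would pass to radial coordinates. Because $g(x)=h(|x|)$ with $h$ positive and nondecreasing, $p^{(x)}(\tau)=\pr(S(\sigma)\ge h(|x|)/\tau)=:q_\sigma(|x|)$ is nonincreasing in $|x|$, and by Proposition~\ref{propmean2} the integral becomes $\int_0^\infty q_\sigma(r)^2\,|\Lambda|(dr)$. Splitting at a radius $r_1$, the inner part is at most $|\Lambda|(r_1)$ (as $q_\sigma\le1$), which is small by $\lim_{r\downarrow0}|\Lambda|(r)=0$, while the outer part is at most $q_\sigma(r_1)\int_{r_1}^\infty q_\sigma\,d|\Lambda|\le q_\sigma(r_1)M(\tau)$; here $q_\sigma(r_1)=\pr(S(\sigma)\ge h(r_1)/\tau)\to0$ by (i), since $h(r_1)>0$, while $M(\tau)\to L(\tau)<\infty$ by (ii). Letting $\sigma\to\infty$ and then $r_1\downarrow0$ gives (a).

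For (b) I would use the standard fact that a sequence of Cox processes converges weakly if and only if their directing random measures converge in distribution; as the limit $L$ is deterministic, it is enough to prove $M^\Xi(t)\convp L(t)$ for each $t\in C(L)$ and then pass to increments $M^\Xi(b)-M^\Xi(a)$ to obtain vague convergence of the directing measures on $\pR$. The mean is handled by assumption~(ii), which gives $\mean M^\Xi(t)=M(t)\to L(t)$, so the crux, and the main obstacle, is to show $\Var(M^\Xi(t))\to0$ using the hypothesis $\phi(r):=\Var(|\Xi|(r))/(|\Lambda|(r))^2\to0$ as $r\to\infty$.

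For the variance, write $M^\Xi(t)=\int_0^\infty|\Xi|(h^{-1}(st))\,dF_\sigma(s)$ with $F_\sigma$ the law of $S(\sigma)$ (Proposition~\ref{propmean2}). Minkowski's integral inequality applied to the centered integrand gives $\sqrt{\Var(M^\Xi(t))}\le\int_0^\infty\sqrt{\phi(h^{-1}(st))}\,|\Lambda|(h^{-1}(st))\,dF_\sigma(s)$. I would then split this integral according to whether $h^{-1}(st)$ is below or above a level $r_0$; note $h^{-1}(st)<r_0$ if and only if $s<h(r_0)/t$, by left continuity of $h$. On $\{h^{-1}(st)\ge r_0\}$ one bounds $\sqrt{\phi}$ by $\sup_{r\ge r_0}\sqrt{\phi(r)}$ and uses $\int|\Lambda|(h^{-1}(st))\,dF_\sigma(s)=M(t)\to L(t)$, which is small once $r_0$ is large. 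On $\{h^{-1}(st)<r_0\}$, since $\phi$ may blow up near $0$, I instead bound $\sqrt{\Var(|\Xi|(r))}\le\sqrt{\mean|\Xi|(r)^2}=:\Psi(r)$, which is nondecreasing with $\Psi(0^+)=0$ (because $\lim_{r\downarrow0}|\Lambda|(r)=0$ forces $|\Xi|(0^+)=0$ a.s.\ and $|\Xi|(r)^2$ is dominated on $[0,r_0]$); then $\int_{\{h^{-1}(st)<r_0\}}\Psi(h^{-1}(st))\,dF_\sigma(s)\to0$ by a further split of $S(\sigma)$ into $\{S(\sigma)<\delta\}$ and $\{\delta\le S(\sigma)<h(r_0)/t\}$, the former being at most $\Psi(h^{-1}(\delta t))\to0$ as $\delta\downarrow0$ and the latter at most $\Psi(r_0)\,\pr(S(\sigma)\ge\delta)\to0$ by (i). Taking $\sigma\to\infty$, then $\delta\downarrow0$, then $r_0\uparrow\infty$ yields $\Var(M^\Xi(t))\to0$, completing (b) and hence the proof. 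The delicate point throughout is that, because $S(\sigma)\convp0$, the mass producing the limit $L$ sits on the event that $S(\sigma)$ is atypically large, equivalently on large radii $h^{-1}(S(\sigma)t)$; the splitting is designed precisely so that small-radius contributions vanish while the variance-to-mean ratio $\phi$ controls the large-radius contributions.
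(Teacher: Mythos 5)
Your proposal is correct, and while it shares the paper's overall skeleton---first replace $N\t\sigma$ by the Cox process directed by $\MXs$ via Theorem~\ref{thmdtv2}, with exactly the same radial splitting $\int_{\IR^d}\pxs(\tau)^2\Lambda(dx)\le|\Lambda|(r_1)+\pr\bigl(S(\sigma)\ge h(r_1)/\tau\bigr)M_\sigma(\tau)$ for the total variation term---it handles the second stage by a genuinely different route. The paper proves vague convergence in distribution of $\MXs$ to $L$ by testing against compactly supported $C^1$ functions, integrating by parts, decomposing into four terms (I)--(IV), and controlling the fluctuation term (I) through a symmetrization/arithmetic--geometric-mean variance estimate; it then converts this into weak convergence of the point processes via the Brown--Xia Wasserstein metrics $d_{1t},d_{2t}$ and \cite[Theorem~1.5]{BrownXia1995} applied conditionally on $\Xi$, arriving at the quantitative bound \Ref{14102605}. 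You instead invoke continuity of the Cox transform (Cox processes converge in distribution if and only if their directing random measures do; this is standard but should be cited, e.g.\ from \cite{Kallenberg1983}) and prove the concentration $\MXs(t)\convp L(t)$ at each $t\in C(L)$ directly by Chebyshev: the mean is assumption (ii), and your variance bound via Minkowski's integral inequality, $\sqrt{\Var(\MXs(t))}\le\int_0^\infty\sqrt{\Var\bigl(|\Xi|(h^{-1}(st))\bigr)}\,dF_\sigma(s)$, followed by the $r_0/\delta$ splitting, is a clean and arguably simpler substitute for the paper's estimate of (I). The delicate points all check out: $\mean|\Xi|(r)^2<\infty$ for every $r$ (finite for large $r$ is implicit in \eq{2014110901}, and the second moment is monotone in $r$); $\Psi(r):=\sqrt{\mean|\Xi|(r)^2}$ satisfies $\Psi(0^+)=0$ by dominated convergence since $\lim_{r\downarrow0}|\Lambda|(r)=0$ forces $|\Xi|(0^+)=0$ a.s.; $h^{-1}(y)\downarrow0$ as $y\downarrow0$ because $h>0$ on $\pR$; and the passage from pointwise convergence in probability at a dense set of continuity points to vague convergence of the random measures is standard by monotonicity, with your final assembly through restrictions at $t\in C(L)$ and a Lemma~\ref{lemPPPcon}-type argument matching the paper's step (iii). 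The trade-off is that your argument is technically more elementary (no Stein-method metrics) but purely qualitative and reliant on a textbook equivalence, whereas the paper's $d_{2t}$ estimate is self-contained given \cite{BrownXia1995} and has independent interest as an approximation bound with explicit error terms.
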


We provide a few easy examples to illustrate the result; Example (iii) shows
that if {\Ref{2014110901} is} not satisfied then in general we can't say the limit is Poisson.

\begin{Example}
\mbox{}
\begin{itemize}
\item[(i)] If $\Xi=\xi$ is non-random, $\Var({|\Xi|}(r))=0$ and Theorem~\ref{thm2} reiterates Theorem~\ref{thmshadow} that the limit process is Poisson.
\item[(ii)] If $\Xi$ is a Poisson process with intensity $\lambda$, then $\var ({|\Xi|}(r))/(\mean {|\Xi|}(r))^2=(\lambda\pi r^2)^{-1}$ which tends to zero
as $r\to\infty$, so the limit process is Poisson.
\item[(iii)] If $\Xi$ is a Cox process having intensity $\lambda_i{>0}$ with probability $1/2$ for $i=1,2$ and $\lambda_1\not=\lambda_2$, 
then it is clear from Theorem~\ref{thmPPP} 
that $N$ 
is a Cox process directed by a random mean measure that takes two measures with equal probability. Using the generic formula 
$\Var(X)=\mean\Var[X | \ca]+\Var( \E [X | \ca] )$, valid for any random variable $X$ and sigma-algebra $\ca$, we can derive that 
\[
\mean {|\Xi|}(r)=\frac{\lambda_1+\lambda_2}2\pi r^2,\,\,\,\,\,\,\,\, \var({|\Xi|}(r))=\frac{\lambda_1+\lambda_2}2\pi r^2+\frac{(\lambda_1-\lambda_2)^2}4\pi^2 r^4,
\]
and hence {$\lim_{r\downarrow 0}|\Lambda|(r)=0$, $\lim_{r\to\infty}|\Lambda|(r)=\infty$ but}
$$\lim_{r\to\infty}\frac{\Var ({|\Xi|}(r))}{\left(\mean {|\Xi|}(r)\right)^2}=\frac{(\lambda_1-\lambda_2)^2}{(\lambda_1+\lambda_2)^2}\ne0.$$
\end{itemize}
\end{Example}

\section{Applications}\label{secex} 
We assume throughout this section that %$\cs=\IR^d$, 
$g$ is a function on $\IR^d$ such that {$g$ is positive on $\IR^d/\{0\}$ and} $g(x)=h(\abs{x})$ for $h$ a {left continuous and nondecreasing} function with inverse $h^{-1}$. %=\inf\{x: h(x)\geq y\}$.
We apply Theorem~\ref{thmshadow} to fading tending to zero (Section~\ref{sec:fad}),
Theorem~\ref{thmdtv2} to a composite model (Section~\ref{sec:compo}),
Theorem~\ref{thmPPP} to transmitters placed according to a Poisson process (Section~\ref{expoisson}),
and Theorem~\ref{thm2} to transmitters placed according to an $\alpha$-Ginibre process (Section~\ref{sec:gini} where $\alpha$-Ginibre processes are defined).

\subsection{Fading tending to zero}\label{sec:fad}

As already discussed in Remark~\ref{rem:logno}, Theorem~\ref{thmshadow} can be applied to many standard models. 
We first demand that $\xi$ is such that there is a nondecreasing function $D$ satisfying 
\[
\lim_{r\to\infty} \frac{{|\xi|}(r)}{D(r)}=1 \,\,\,\,\mbox{ and } \,\,\,\,\, \lim_{r\to0} D(r)=0.
\]
This condition is satisfied for lattices on $\IR^d$, stationary and ergodic processes. In particular, specializing to the case $d=2$, if transmitters are
at the vertices of 
a
\begin{itemize}
\item triangular lattice, edge lengths $s$, excluding the origin, then $D(r)=2 \pi r^2/(\sqrt{3}s^2)$; 
\item hexagonal lattice, edge lengths $s$, excluding the origin, then  $D(r)= 4\pi r^2/(3\sqrt{3} s^2)$;
\item  square lattice, edge lengths $s$, excluding the origin, then $D(r) =\pi r^2/s^2$.
\end{itemize}
From this point, for a given $h$, we only need to have $S(\sigma)\convp0$
and $\mean D( h^{-1}(S(\sigma) t)) \to {L}(t)$ {for $t\in C(L)$}. Computing this expectation is straightforward under nice distributions of $S(\sigma)$;
even for composite models, {for example, when $S(\sigma)$} is a product of independent variables. 
Finally, it's worth repeating  that if ${L}$ equals zero or infinity, the theorem is still true (with the obvious interpretation of a Poisson with mean {zero or infinity}) though not {of practical interest}.

\subsection{Dependent composite fading}\label{sec:compo}
In Remark~\ref{rem:logno}, we showed that if the transmitters are placed on $\IR^2$, 
$g(x)=(K\abs{x})^\beta$ for $\beta>2, K>0$, $\xi$ is such that ${|\xi|(r)}/r^2\to \lambda \pi$,
%that there is a $D$ 
%satisfying the hypotheses of Theorem~\ref{thmshadow}:
%\[
%\lim_{r\to\infty} \frac{\xi(r)}{D(r)}=1 \,\,\,\,\mbox{ and } \,\,\,\,\, \lim_{r\to0} D(r)=0,
%\] 
and the composite fading distribution is 
\[
S(\sigma)=\exp\{\sigma B-\sigma^2/\beta\} S_F
\]
where $B$ is a standard normal {random variable} independent of $S_F$, and $S_F$ is exponential with rate $\Gamma(1+2/\beta)^{\beta/2}$ (or any other random variable such that $\mean S_F^{2/\beta}=1$ ),
then $N\t \sigma$ converges to Poisson process with mean measure
 \[
 M(t)=\frac{\lambda \pi t^{2/\beta}}{K^2}.
 \]
 (The rate of the exponential is only a scaling factor and convergence only 
 demands that $\mean S_F^{2/\beta}$ be finite, though this changes the mean measure.)
Now assume instead that each $S_i(\sigma)$, $i=1,2,\ldots$
is distributed as $S(\sigma)$, but rather than being i.i.d.\ they share a common 
$S_F$ variable which is not necessarily exponential. That is, define
\[
S_i=\exp\{\sigma B_i-\sigma^2/\beta\} S_F
\]
where $B_1,B_2,\ldots$ are i.i.d.\ standard normal random variables. Then conditional on $S_F$, $N\t \sigma$ converges 
to a Poisson process with mean measure 
\[
M^{S_F}(t){:}=\frac{S_F^{2/\beta}\lambda \pi t^{2/\beta}}{K^2}.
\]
Thus $N\t \sigma$ converges to a Cox process directed by $M^{S_F}$. 

%\cyan{Of couse, we could have chosen $S_F$ to be i.i.d. and $S_L$ to be a common variable (), and still have obtained a Cox process.  }

%Alternatively, we could have chosen $S_F$ to be i.i.d. and $S_L$ to be a common variable, and still have obtained a Cox process.  In other words, if one of the 
%{$S_L$ and $S_F$} in the product-based composite  model is common to all signals and the other is i.i.d. across each transmitter, then the resulting propagation process is again a Cox process, and {may not be} a Poisson process.

\subsection{Transmitters placed according to {a} Poisson process}\label{expoisson}   

As mentioned in the introduction, a common assumption is that the locations of transmitters $\Xi$ follow a Poisson process.
If $\Xi$ is a Poisson process then Theorem~\ref{thmPPP} shows that $N$ is a Poisson process. 
%;this generalizes several results in the literature; see \cite[Lemma~1]{blaszczyszyn2013using} and references there. 
Moreover, the mean measure of $N$ is computed {through \Ref{meanform2b}.}
%by first computing the conditional expectation given $\Xi$ via~\eq{meanform2} and then taking expectations to find that
%\ben{ \label{PPPmean}
%M(t)= \mean {|\Lambda|} ( h^{-1}(St)).
%}
We compute the mean measure in some examples.

\medskip
\noindent\textbf{Example A.} If $\Xi$ is a homogeneous Poisson process with intensity $\lambda$,
$g(x)=(K\abs{x})^\beta$,  for some $\beta>0, K>0$, then it's well known (see
\cite[Section I.A]{blaszczyszyn2014studying}) that $N$ is a Poisson process with  intensity measure depending on $S$ only through~$\mean S^{2/\beta}$, which has been referred to as \emph{propagation invariance}.% \cyan{and the corresponding signal-to-interference ratio is trivially related to a special case of the two-parameter Poisson-Dirichlet process~\cite{keeler2014sinr}}. 
So $h(r)= K^\beta r^\beta$ and  the mean measure as given by~\eq{meanform2} is
 \ba{
 M(t)	&=\lambda \pi \mean\left [h^{-1}(St)^2 \right]={\lambda \pi t^{2/\beta} \mean\left [S^{2/\beta} \right]}/K^2.
 }
  
  \medskip
\noindent\textbf{Example B.} More generally, if $h(r)= r^\beta e^{\alpha r}$ (used in the empirical work \cite{franceschetti2004random}) and
 $W$ is the Lambert W-function, i.e., $x=W(y)$ is the solution of $xe^x=y$, then {$h(r)=x$} gives $r{=h^{-1}(x):}=\frac \beta \alpha W(\alpha x^{1/\beta}/\beta)$, so Proposition~\ref{propmean2} implies
\ba{
M(t)&=\pi\lambda \mean \left(\frac \beta\alpha W\left(\frac \alpha \beta (t S)^{1/\beta}\right)\right)^2 \\
	&=\pi\lambda \mean\left[ (t S)^{2/\beta}e^{-2W\left(\frac \alpha \beta(t S)^{1/\beta}\right)} \right],
} 
where the second equality uses the fact that $W(y)^2=y^2e^{-2W(y)}$.

\medskip
\noindent\textbf{Example C.} Generalizations of the power-law path loss function are so-called multi-slope models which
have 
\be{
h(r)=\left(\sum_{i=1}^{{k+1}}  {\1_{r_{i-1}\leq r <r_i}}  b_i r^{-\beta_i}\right)^{-1}=\sum_{i=1}^{{k+1}}  {\1_{r_{i-1}\leq r <r_i}}  b_i^{{-1}} r^{\beta_i},
}
where {$\1$ is the indicator function}, $0={r_0<}r_1<\cdots< r_k<r_{k+1}=\infty$, $\beta_i>0$, and $b_i>0$ are chosen to make $h$ continuous; see \cite{dualslope} and references there. Since each interval $[r_{i-1},r_i)$ is disjoint with all others, the inverse of the multi-slope model is simply
\be{
h^{-1}(s)=\sum_{i=1}^{{k+1}}  {\1_{s_{i-1}\leq s   <s_i}}  c_i s^{1/\beta_i},
}
where $s_i=b_i^{{-1}} r_i^{\beta_i}$ {and $c_i=b_i^{1/\beta_i}$.} 
Theorem~\ref{thmPPP} says that the {propagation} process is Poisson and according to  expression~{\eq{meanform2b}}, the mean {measure} is
\begin{align}
M(t)&=2\pi\lambda \mean(h^{-1}(tS)^2)=2\pi\lambda \sum_{i=1}^k t^{2/\beta_i} c_i \mean\left[ {\1_{s_{i-1}\leq tS <s_i}}   S^{2/\beta_i} \right]. \label{20a}
\end{align}
%Theorem~\ref{thmPPP}  applies here and the formula~{\eq{meanform2b}} also holds and is easily computed in concrete cases. 
Note that in contrast to the propagation invariance of the case of the power-law path loss function of Example~B above,
where all $S$ with the same $2/\beta$ moment 
induce a common propagation process distribution, the form of the mean measure~\eq{20a}
suggests that no analogous simple invariance property holds for the multi-slope model.

\subsection{Transmitters placed as an $\alpha$-Ginibre process}\label{sec:gini}

The assumption that transmitters are placed according to a Poisson process can be heuristically justified by considering an ``average" observer with 
fixed transmitters
and is a convenient assumption due to its tractability. There has been recent interest in modeling the transmitter locations
according to other processes, especially those that exhibit repulsion or clustering among the points (some networks are designed 
to resemble lattices while others have clustering due to physical and technological considerations) \cite{soellerhaus2014} . One such process that exhibits repulsion is the $\alpha$-Ginibre process on the complex plane $\C$ which has been used to model mobile networks \cite{Miyoshi2014}, \cite{miyoshi2014cellular}.
The process is defined through the factorial moment measures: for a locally finite process $\Xi$ on a Polish space $\cs$, the $n$\,th order {\em factorial moment measure} $\nu^{(n)}$ of $\Xi$  is defined by the relation \cite[pp.~109--110]{Kallenberg1983} (also see~\cite[Chapter 9]{baccelli2009stochastic1})
\begin{eqnarray}
&& \E \left[ \int_{\cs^n}f(x_1,\dots,x_n) \Xi(dx_1)\left(\Xi-\delta_{x_1}\right)(dx_2)\dots \left(\Xi-\sum_{i=1}^{n-1}\delta_{x_i}\right)(dx_n)  \right]\nonumber\\
&&=\int_{\cs^n}f(x_1,\dots,x_n)  \nu^{(n)}(dx_1,\dots,dx_n),\label{ax14-09-26-1}
\end{eqnarray}
where $f$ ranges over all Borel measurable functions $h:\cs^n\rightarrow [0,\infty)$. 
The special case $\nu^{(1)} $ is simply the mean measure of $\Xi$. {To define the Ginibre process, for $x\in\C$, let $\bar x$ and $|x|$ be the complex conjugate and modulus of $x$.}

\begin{Definition}
We say the process {$\Xi$} on the complex plane {$\C$} is an $\alpha$-Ginibre process if its factorial moment measures are given by
\[
\nu\t n (dx_1, \ldots, dx_n) = {\rho\t n(x_1,\ldots, x_n) dx_1\dots dx_n},\ n\ge 1,
\]
{where $\rho\t n(x_1,\ldots, x_n)$ is the determinant of the $n\times n$ matrix with $(i,j)$th entry 
\[K_{\alpha,c}(x_i,x_j)=\frac c\pi e^{-\frac{c}{2\alpha}(|x_i|^2+|x_j|^2)}e^{\frac c\alpha x_i\bar x_j},\ c>0. \]}
\end{Definition}
In particular, direct computation gives
\[
\rho^{(1)}(x)=\frac c\pi>0 \,\,\,\,\, \mbox{ and } \,\,\,\,\, \rho^{(2)}(x,y)=\frac{c^2}{\pi^2}(1-e^{-\frac c\alpha|x-y|^2}).
\]

\begin{Theorem} \label{thm3} 
Assume that $\Xi$ is an  $\alpha$-Ginibre process on $\C$,
$(S(\sigma))_{\sigma\geq0}$ is a family of positive random variables, $g$ is a function such that {$g$ is positive on $\IR^d/\{0\}$ and}  $g(x)=h(\abs{x})$, with $h$ a {left continuous and nondecreasing}
function with inverse $h^{-1}$, and $N\t \sigma$ is the propagation process generated by $S(\sigma)$, $g$ and $\Xi$. If
 \be{
(i)\  S(\sigma) \convp 0  \,\,\,\,\,\, \mbox{ and } \,\,\,\,\,\,  (ii)\  \lambda \pi \mean \left[h^{-1}(S(\sigma) t))^2\right] \to {L(t)} {\mbox{ for all }t\in C(L)}
}
as $\sigma\to\infty$, then 
$N\t \sigma$ converges weakly to a Poisson process {$Z^L$} with mean measure $L$.
\end{Theorem}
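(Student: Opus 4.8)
The plan is to deduce Theorem~\ref{thm3} from Theorem~\ref{thm2} by checking that the $\alpha$-Ginibre process meets every hypothesis of the latter. The first step is to read off the mean measure from the factorial moment measures: since $\rho^{(1)}(x)=c/\pi$ is constant, $\Xi$ has mean measure $\Lambda$ with constant density $c/\pi$ on $\C=\IR^2$, so setting $\lambda=c/\pi$ gives ${|\Lambda|}(r)=\lambda\pi r^2$. In particular $\lim_{r\downarrow0}{|\Lambda|}(r)=0$ and ${|\Lambda|}(r)\to\infty$ as $r\to\infty$, which are two of the three spatial requirements in~\eq{2014110901}.

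The remaining spatial requirement, $\Var({|\Xi|}(r))/({|\Lambda|}(r))^2\to0$, is the only genuinely computational step and the one I expect to require the most care. For $A=\{x\in\C:|x|\le r\}$ I would use the standard identity linking the count variance to the first two factorial moment measures,
\[
\Var({|\Xi|}(r))=\nu^{(2)}(A\times A)+\nu^{(1)}(A)-\left(\nu^{(1)}(A)\right)^2,
\]
which follows from~\eq{ax14-09-26-1} applied with $f=\1_{A\times A}$. Substituting $\nu^{(1)}(A)=\frac c\pi|A|$ and $\rho^{(2)}(x,y)=\frac{c^2}{\pi^2}(1-e^{-\frac c\alpha|x-y|^2})$, the constant part of $\rho^{(2)}$ contributes $\frac{c^2}{\pi^2}|A|^2$, which exactly cancels $(\nu^{(1)}(A))^2$, leaving
\[
\Var({|\Xi|}(r))={|\Lambda|}(r)-\frac{c^2}{\pi^2}\int_A\int_A e^{-\frac c\alpha|x-y|^2}\,dx\,dy\le{|\Lambda|}(r).
\]
This sub-Poissonian bound, a reflection of the repulsive (determinantal) structure, is clean once the cancellation is confirmed, and dividing by $({|\Lambda|}(r))^2=(\lambda\pi r^2)^2$ yields $\Var({|\Xi|}(r))/({|\Lambda|}(r))^2\le 1/(\lambda\pi r^2)\to0$.

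It remains to match the two analytic hypotheses. Condition~(i), $S(\sigma)\convp0$, is common to both theorems. For condition~(ii), I would invoke Proposition~\ref{propmean2}, specifically~\eq{meanform2b} together with ${|\Lambda|}(r)=\lambda\pi r^2$, to rewrite the integral appearing in hypothesis~(ii) of Theorem~\ref{thm2} as
\[
\int_{\IR^d}\pr\!\left({0<}\frac{g(x)}{S(\sigma)}\le t\right)\Lambda(dx)=\mean\!\left[{|\Lambda|}\big(h^{-1}(S(\sigma)t)\big)\right]=\lambda\pi\,\mean\!\left[h^{-1}(S(\sigma)t)^2\right].
\]
Hence the convergence assumed in hypothesis~(ii) of Theorem~\ref{thm3} is exactly hypothesis~(ii) of Theorem~\ref{thm2} with the same limit $L$. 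With all hypotheses verified, Theorem~\ref{thm2} gives $N\t\sigma\convd Z^L$, completing the proof.
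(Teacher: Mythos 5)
Your proposal is correct and follows essentially the same route as the paper: verify the variance condition of Theorem~\ref{thm2} for the $\alpha$-Ginibre process via the first two factorial moment measures (your explicit cancellation of the $\frac{c^2}{\pi^2}|A|^2$ term against $(\nu^{(1)}(A))^2$ is just a sharper bookkeeping of the paper's bound $\mean[|\Xi|(r)^2]\le c^2r^4+cr^2$, both yielding $\Var(|\Xi|(r))\le cr^2$), and then invoke Theorem~\ref{thm2}. Your additional checks --- that $|\Lambda|(r)=\lambda\pi r^2$ with $\lambda=c/\pi$ satisfies the remaining conditions in \eq{2014110901}, and that hypothesis~(ii) matches via \eq{meanform2b} --- are steps the paper leaves implicit, not a different argument.
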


\begin{proof}[Proof of Theorem~\ref{thm3}] 
We show $\Var({|\Xi|}(r))/(\mean{|\Xi|}(r))^2\to0$ as $r\to\infty$ and then the result follows by Theorem~\ref{thm2}.
Now, let $C_r=\{x\in{\C}:\ |x|\le r\}$, we find
\begin{eqnarray*}
\mean[{|\Xi|}(r)^2]&=&\mean\int_{C_r}\int_{C_r}\Xi(dx)(\Xi-\delta_x)(dy)+\mean\int_{C_r}\Xi(dx)\\
&=&\int_{C_r}\int_{C_r}\frac{c^2}{\pi^2}\left(1-e^{-\frac c\alpha|x-y|^2}\right)dxdy+\int_{C_r}\frac c\pi dx\\
&\le& c^2r^4+cr^2.
\end{eqnarray*}
Since $\mean {|\Xi|}(r)=cr^2$, {we find} that $\var({|\Xi|}(r))\le c r^2$ and $\var({|\Xi|}(r))/(\mean{|\Xi|}(r))^2\le 1/(cr^2)\to0$ as $r\to\infty.$
\end{proof}

For example, choosing $g(x)=(K\abs{x})^\beta$ for $\beta>2, K>0$, 
and 
\[
S(\sigma)=\exp\{\sigma B-\sigma^2/\beta\} S_F
\]
where $B$ is standard normal {random variable}, independent of $S_F$ and which {satisfies} $\mean S_F^{2/\beta}=1$ 
(for example $S_F=1$ or is exponential with rate $\Gamma(1+2/\beta)^{\beta/2}$)
then $N\t \sigma$ converges to Poisson process  with mean measure
 \[
 M(t)=\frac{\lambda \pi t^{2/\beta}}{K^2}.
 \]

\section{Proofs}\label{secproofs}

\begin{proof}[Proof of Theorem~\ref{thmdtv}]
We first show the upper bounds. Let $Z_i$ be {independent} Poisson processes on $\pR$ with mean measures $p_i(t)$ and $N_i$ be the process placing a single point at $Y_i$. 
Notice that
\be{
\sum_{i\in\mcI^\xi}Z_i{\equald}Z, \,\,\,\, \mbox{ and } \,\,\,\, \sum_{i\in\mcI^\xi}N_i{\equald}N,
}
where $\equald$ means they are equal in distribution, and  by independence we can bound
\ben{\label{t1}
\dtv(\law(Z\res \tt), \law(N\res \tt))\leq \sum_{i\in\mcI^\xi} \dtv(\law(Z_i\res \tt), \law(N_i\res \tt)).
}
Straightforward considerations show that for both $Z_i\res \tt$ and ${N_i}\res \tt$, given there is {a single} point in the interval
$(0,\tt]$, it is distributed with density
\be{
\frac{p_i(dt)}{p_i(\tt)}, \,\,\, \, 0< t \le \tt.
}
An alternative expression for the total variation distance, using Monge-Kantorovich duality~\cite{Rachev1984}, is
\be{
\dtv(\nu_1, \nu_2)=\inf_{(\xi_1, \xi_2)} \pr(\xi_1 \not= \xi_2), 
}
where the infimum is over all couplings of $\nu_1, \nu_2$. So from the previous {consideration}, 
if $Z_i(\tt)=N_i(\tt)$ (as above, $Z_i(\tt)$ is the number of points of $Z_i$ in the interval $(0,\tt]$), then we can
couple $Z_i\res \tt$ and $N_i\res \tt$ exactly. Thus, we easily find
\ben{\label{t2}
\dtv(\law(Z_i\res \tt), \law(N_i\res \tt)){=} \dtv(\law(Z_i(\tt)), \law(N_i(\tt)))\leq p_i(\tt)^2;
}
the last inequality follows by noting that $Z_i(\tt)$ is Poisson with mean $p_i(\tt)$ and $N_i(\tt)$ is a Bernoulli with success probability
$p_i(\tt)$ and then using well-known bounds between Poisson {random} variables and Bernoullis~\cite{LeCam1960}; see also \cite[Formula~(1.8)]{Barbour1992}. 
Combining~\eq{t1} and~\eq{t2} proves the first upper bound of the theorem and the second is simple.

For the lower bound, note that the number of points of $N$, respectively $Z$, falling in $(0,\tt]$ is a measurable function of $N\res \tt$, respectively $Z\res \tt${,} so
\[
\dtv(\law(N(\tt)), \law(Z(\tt))) \leq \dtv(\law(N\res \tt), \law(Z\res \tt)).
\]
But as already observed, $N(\tt)$ is a sum of independent indicators and $Z(\tt)$ is a Poisson distribution having mean $M(\tt)$ common with $N(\tt)$. Thus the lower bound
of \cite[Theorem~2]{Barbour1984} applies which is exactly the lower bound of the theorem.
\end{proof}

We use Theorem~\ref{thmdtv} to prove convergence results through the following lemma which is easily
proved from \cite[Theorem~4.2]{Kallenberg1983}.
\begin{Lemma}\label{lemPPPcon}
If $N\t n$ is a sequence of processes on $\pR$ such that there is a set $\{t_i:i\in\IN\}$ with  $0<t_i\uparrow\infty$ and for each $t_i$,
$N\t n\res{t_i} \convd  Z \res{t_i}$ as $n\to\infty$ in the vague topology for some process $Z$ on $\pR$, then
$N\t n\convd Z$ as $n\to\infty$.
\end{Lemma}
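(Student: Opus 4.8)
The plan is to reduce vague convergence of the full processes to the assumed convergence of the truncations, exploiting the fact that every test function on $\pR=(0,\infty)$ has support bounded away from both $0$ and $\infty$. By \cite[Theorem~4.2]{Kallenberg1983}, to establish $N\t n\convd Z$ in the vague topology it suffices to show that the Laplace functionals converge, that is, $\mean e^{-\int f\, dN\t n}\to \mean e^{-\int f\, dZ}$ for every continuous nonnegative $f$ with compact support in $\pR$.

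First I would fix such an $f$. Its support is a compact subset of $(0,\infty)$, hence contained in some interval $(0,b]$ with $b<\infty$; since $t_i\uparrow\infty$, I choose an index $i$ with $t_i\ge b$. Because $f$ vanishes outside $(0,t_i]$, the key observation is that $\int f\, dN\t n=\int f\, d(N\t n\res{t_i})$ and $\int f\, dZ=\int f\, d(Z\res{t_i})$ hold identically in $n$ and in the realization: truncation at the level $t_i$ does not affect the integral of $f$.

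Next I would invoke the hypothesis $N\t n\res{t_i}\convd Z\res{t_i}$. Since the map $\mu\mapsto \int f\, d\mu$ is vaguely continuous for $f$ continuous with compact support, the continuous mapping theorem gives $\int f\, d(N\t n\res{t_i}) \convd \int f\, d(Z\res{t_i})$, and hence $\mean e^{-\int f\, d(N\t n\res{t_i})}\to\mean e^{-\int f\, d(Z\res{t_i})}$ by bounded convergence. Combining this with the identities of the previous paragraph yields $\mean e^{-\int f\, dN\t n}\to\mean e^{-\int f\, dZ}$. As $f$ was arbitrary, a second application of \cite[Theorem~4.2]{Kallenberg1983} gives $N\t n\convd Z$.

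The only point requiring care is the topological reduction: one must note that a continuous compactly supported function on $\pR=(0,\infty)$ has support bounded away from both endpoints, so that truncation at a single sufficiently large $t_i$ leaves its integral unchanged. This is precisely why a discrete sequence $t_i\uparrow\infty$ of truncation levels, rather than convergence of $N\t n\res\tt$ for all $\tt>0$, is enough to conclude. The remaining steps are routine applications of the random-measure convergence criterion, so I do not expect any substantive obstacle beyond this observation.
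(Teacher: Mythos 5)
Your proof is correct and takes essentially the same approach as the paper, which proves this lemma by exactly the reduction you describe: the paper simply notes the result is ``easily proved from \cite[Theorem~4.2]{Kallenberg1983}'', and your elaboration---observing that any $f\in C_K^+(\pR)$ has support in some $[a,b]\subset(0,t_i]$ so that $\int f\,dN\t n=\int f\,d(N\t n\res{t_i})$, then passing through the continuous mapping theorem and Laplace functionals---is precisely the intended routine verification.
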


\begin{proof} [Proof of Corollary~\ref{corollary1}] (c.f., \cite{Kallenberg1975, Brown1979, Schuhmacher2005, Schuhmacher2009}) For each $t>0$, let $\chi_t$ (resp. $\ch_t\subset \chi_t$) be the set of all finite Radon measures (resp. point measures) on $(0,t]$
and $\ck_t$ be the set of all Lipschitz functions on $(0,t]$ with respect to the metric $d_0(x,y)=\min\{1,|x-y|\}$, i.e., $\ck_t=\{k:\ |k(x)-k(y)|\le d_0(x,y),\ x,y\in(0,t]\}$. \cite{BarbourBrown1992} introduce 
a Wasserstein metric $d_{1t}$ for finite measures $\eta_1,\eta_2\in\chi_t$ as
$$d_{1t}(\eta_1,\eta_2)=\left\{\begin{array}{ll}
1&\mbox{ if }{\eta_1(t)}\ne {\eta_2(t)},\\
0&\mbox{ if }{\eta_1(t)}={\eta_2(t)}=0,\\
\frac{\sup_{k\in\ck_t}\left\vert \int_{(0,t]}k(x)\eta_1(dx)-\int_{(0,t]}k(x)\eta_2(dx)\right\vert}{{\eta_1(t)}}&\mbox{ if }{\eta_1(t)}={\eta_2(t)}>0.
\end{array}\right.$$
Moreover, if $\eta_1=\sum_{j=1}^m\delta_{x_j}$ and $\eta_2=\sum_{j=1}^m\delta_{y_j}$ with $m>0$ and $\{x_j,y_j:\ 1\le j\le m\}\subset (0,t]$, one can write 
$$d_{1t}(\eta_1,\eta_2)=\min_\pi\left\{m^{-1}\sum_{j=1}^md_0(x_j,y_{\pi(j)})\right\},$$
where $\pi$ ranges over all permutations of $\{1,\dots,m\}$ \cite[Section~2.2]{Rachev1984}. The metric $d_{1t}$ quantifies the vague (and the weak) topology on
$\ch_t$ \cite{Xia2005}. \cite{BarbourBrown1992} then introduce a Wasserstein metric 
$d_{2t}$ induced by $d_{1t}$ for two distributions $Q_1$ and $Q_2$ on $\ch_t$ as
$$d_{2t}(Q_1,Q_2)=\sup_f\left\vert \int_{\ch_t}fdQ_1-\int_{\ch_t}fdQ_2\right\vert,$$
where the supremum is taken over all $d_{1t}$-Lipschitz functions on $\ch_t$. Let $Z\t n$ be a Poisson process on ${\pR}$ with mean measure {$M\t n(\cdot):=\mean N^{(n)}(\cdot)$}. 
 Now applying the triangle inequality, using the fact that $d_{2t}(Q_1, Q_2)\leq \dtv(Q_1, Q_2)$ and then applying the results \cite[Theorem~1.5]{BrownXia1995}
 and Theorem~\ref{thmdtv}, we obtain for $t\in C(L)$,
\begin{eqnarray*}
&&d_{2t}(\law(N\t n \res t), \law(Z \res t))\\
&&\le d_{2t}(\law(N\t n \res t), \law(Z\t n\res t))+d_{2t}(\law(Z\t n \res t), \law(Z \res t))\\
&&\le \dtv(\law(N\t n \res t), \law(Z\t n\res t))+d_{1t}(M\t n\res t/M\t n(t),L\res t/L(t))+|M\t n(t)-L(t)|\\
&&\le\sum_{i\in\mcI_n} \pr(Y_{ni}\leq t)^2+d_{1t}(M\t n\res t/M\t n(t),L\res t/L(t))+|M\t n(t)-L(t)|\to 0
\end{eqnarray*} as $n\to\infty$.
Thus  $N\t n \res t\convd  Z \res t$ as $n\to\infty$ for all $t\in C(L)$
and the claim follows from Lemma~\ref{lemPPPcon}. %\cite[Theorem~4.2]{Kallenberg1983}.
\end{proof}

\begin{proof}[Proof of Proposition~\ref{propmean}]
Both assertions follow from writing 
\be{
{|\xi|}(r)=\sum_{i\in \mcI^{{\xi}}} {\1_{{0<}\abs{x_i}\leq r}}.
}
The representation implies 
\be{
\int_{0}^\infty \pr\left({0<}\frac{h(r)}{S}\leq t\right) {|\xi|}(dr)=\sum_{i\in\mcI^{{\xi}}} \pr\left({0<}\frac{g(x_i)}{S}\leq t\right) =M(t).
}
For the second assertion, the indicator representation and the fact that 
\[
h(x)\leq y \iff x\leq h^{-1}(y)
\]
implies
\be{
\mean \left[{|\xi|}(h^{-1}(St))\right]=\mean \sum_{i\in\mcI^{{\xi}}} {\1_{{0<}\abs{x_i}\leq h^{-1}(St) }}=\mean \sum_{i\in\mcI^{{\xi}}} {\1_{{0<}g(x_i)\leq St}}=M(t).
}
\end{proof}

\begin{proof}[Proof of Theorem~\ref{thmPPP}] 
That $Z$ is a Poisson process follows since $\{(x_i, S_i): x_i\in\Theta\}$ is a Poisson point
process on $\IR^d\times \pR$ and the points of $Z$ are a measurable function of this
process. The computation of the mean measure is straightforward and the bound on the total variation distance is easy to see from the coupling definition of the total variation distance {since}
 one can construct a coupling in such a way that if $\Theta=\Xi$ then $Z=N$.
\end{proof}

\begin{proof}[Proof of Theorem~\ref{thmdtv2}] 
We use the following inequality relating the total variation distance of conditioned {random} variables to the unconditional; see, for example, \cite[Section~3]{Roellin2014},
\be{
\dtv(\law(X), \law(Y))\leq \mean \dtv(\law(X|W), \law(Y|W)).
}
We use this inequality to find
\ben{\label{mmm}
\dtv(\law(Z\res \tt), \law(N\res \tt))\leq \mean \dtv(\law(Z^\Xi \res \tt), \law(N^\Xi \res \tt)),
}
where $N^\Xi$ and $Z^\Xi$ denote the processes $N$ and $Z$ conditional
on $\Xi$.
Since $Z^\Xi$ is a Poisson process with mean measure $M^\Xi$, we apply Theorem~\ref{thmdtv} to obtain
\be{
\dtv(\law(Z^\Xi \res \tt), \law(N^\Xi \res \tt))\leq  {\int_{\IR^d}p_i^{(x)}(\tt)^2\Xi(dx)}
}
and now taking expectations and using~\eq{mmm} implies the theorem.
\end{proof}

\begin{proof}[Proof of Theorem~\ref{thmshadow}]
%The first assertion is an easy consequence of Theorem~\ref{thmdtv} and the alternate expression of the mean measure~\eq{meanform2}.
The theorem follows easily from Corollary~\ref{corollary1} and \Ref{meanform2} once we establish that, {for $t\in C(L)$,}
\ben{\label{22}
\lim_{\sigma\to\infty} \mean D( h^{-1}(S(\sigma) t))=\lim_{\sigma\to\infty} \mean {|\xi|}( h^{-1}(S(\sigma) t)).
}
To show~\eq{22}, let $\eps>0$ and $r_{\eps}$ be such that for $r{\ge}r_{\eps}$
\ben{\label{24}
1-\eps < \frac{{|\xi|}(r)}{D(r)} < 1+\eps.
}
Then denoting the distribution function of $S(\sigma)$ by $F_\sigma$, we have
\ban{
&\limsup_{\sigma\to\infty}  \mean {|\xi|}( h^{-1}(S(\sigma) t)) \notag\\
&\qquad\leq \limsup_{\sigma\to\infty}  \int_0^{h(r_\eps)/t}  {|\xi|}( h^{-1}(s t)) F_\sigma(ds)+\limsup_{\sigma\to\infty}\int_{h(r_\eps)/t}^\infty  {|\xi|}( h^{-1}(s t)) F_\sigma(ds) \notag\\
&\qquad = \limsup_{\sigma\to\infty}\int_{h(r_\eps)/t}^\infty  {|\xi|}( h^{-1}(s t)) F_\sigma(ds); \label{23}
}
the equality is because $S(\sigma)\convp 0$ and $\lim_{y\to 0} h^{-1}(y)=0$. Now using the definition~\eq{24} of $r_\eps$ and noting {that 
$s\ge h(r_\eps)/t$ implies $h^{-1}(st)\ge r_\eps$}, 
we bound~\eq{23} from above to find
\ba{
\limsup_{\sigma\to\infty}  \mean {|\xi|}( h^{-1}(S(\sigma) t))& \leq (1+\eps) \limsup_{\sigma\to\infty} \int_{h(r_\eps)/t}^\infty  D( h^{-1}(s t)) F_\sigma(ds) \\
	&\leq (1+\eps) \limsup_{\sigma\to\infty} \int_{0}^\infty  D( h^{-1}(s t)) F_\sigma(ds) \\
	&=(1+\eps) {\lim_{\sigma\to\infty}}\mean D( h^{-1}(S(\sigma) t)).
}
{As $\eps$ is arbitrary, this yields
\ban{
\limsup_{\sigma\to\infty}  \mean {|\xi|}( h^{-1}(S(\sigma) t))\le \lim_{\sigma\to\infty}\mean D( h^{-1}(S(\sigma) t)).\label{14102402}}}
Similarly, 
\ben{\label{25}
\liminf_{\sigma\to\infty}  \mean {|\xi|}( h^{-1}(S(\sigma) t))\geq (1-\eps) \liminf_{\sigma\to\infty} \int_{h(r_\eps)/t}^\infty  D( h^{-1}(s t)) F_\sigma(ds).
}
But again using that $S(\sigma)\to0$ in probability, $\lim_{y\to 0} h^{-1}(y)=0$, and now also that $\lim_{r\to0}D(r)\to 0$,
\be{
\lim_{\sigma\to\infty}  \int_{h(r_\eps)/t}^\infty D( h^{-1}(s t)) F_\sigma(ds)=\lim_{\sigma\to\infty} \int_{0}^\infty  D( h^{-1}(s t)) F_\sigma(ds),
}
and combining this with~\eq{25} implies
\be{
\liminf_{\sigma\to\infty}  \mean {|\xi|}( h^{-1}(S(\sigma) t))\geq (1-\eps) {\lim_{\sigma\to\infty}}\mean D( h^{-1}(S(\sigma) t)).
}
Since $\eps$ was arbitrary, we have that 
\be{
\liminf_{\sigma\to\infty}  \mean {|\xi|}( h^{-1}(S(\sigma) t)) \geq {\lim_{\sigma\to\infty} } \mean {D}( h^{-1}(S(\sigma) t))
}
which, {together with \Ref{14102402},} proves~\eq{22}.
\end{proof}

\begin{proof}[Proof of Theorem~\ref{thm2}] Write $\pxs(t)=\pr({0<}g(x)/S(\sigma)\le t)$,
$\MXs(t)=\int_{\IR^d} \pxs(t)\Xi(dx)$ and $\MLs(t)=\int_{\IR^d} \pxs(t)\Lambda(dx)$. We divide the proof into three steps. 

\noindent(i) $\MXs\convd L$ in the vague topology as $\sigma\to\infty$.

To show the claim, from \cite[Theorem~4.2]{Kallenberg1983}, it suffices to show that
for each continuous function $f:\pR\to\IR_+$ with compact support and bounded continuous first derivative,
\ben{\int_{\pR}f(t)d\MXs(t)\convd \int_{\pR}f(t)dL(t).\label{14102601}}
Now, let $F_\sigma$ be the distribution function of $S(\sigma)$ and let $0<a<b<\infty$ such that $b\in C(L)$ and the support of $f$ is contained in $[a,b]$
{such} that 
$f(b)=f(a)=0$. 
It follows from \Ref{meanform2} that
\[\MXs(t)=\int_{\pR}{|\Xi|}(h^{-1}(st))dF_\sigma(s)\]
and, by taking expectation,
\[\MLs(t)=\int_{\pR}{|\Lambda|}(h^{-1}(st))dF_\sigma(s).\]
Using Fubini's Theorem and noting $f(b)=f(a)=0$, we have 
\ben{\label{20001}
\int_{\pR}f(t)d\MXs(t)=\int_{\pR}\int_a^t f'(s) ds d\MXs(t)=-\int_a^b \MXs(t) f'(t) dt
}
and hence
\ba{
\int_{\pR}f(t)d\MXs(t) %&=\int_0^\infty\left(\int_a^bf(t)d\Xi(h^{-1}(st))\right)dF_\sigma(s)\\
&=-\int_0^\infty\int_a^b{|\Xi|}(h^{-1}(st))f'(t)dtdF_\sigma(s)\\
&=-\int_{\theta_0}^\infty\int_a^b[{|\Xi|}(h^{-1}(st))-{|\Lambda|}(h^{-1}(st))]f'(t)dtdF_\sigma(s)\\
&\ \ \ \ -\int_0^\infty\int_a^b{|\Lambda|}(h^{-1}(st))f'(t)dtdF_\sigma(s)\\
&\ \ \ \ +\int_0^{\theta_0}\int_a^b{|\Lambda|}(h^{-1}(st))f'(t)dtdF_\sigma(s)\\
&\ \ \ \ -\int_0^{\theta_0}\int_a^b{|\Xi|}(h^{-1}(st))f'(t)dtdF_\sigma(s)\\
&=:\mbox{(I)+(II)+(III)+(IV)},}
where $\theta_0$ is chosen such that 
\[\frac{\var\left({|\Xi|}(h^{-1}(sa))\right)}{{|\Lambda|}(h^{-1}(sa))^2}\le 1,\ \forall s\ge \theta_0.\]
We complete the proof of \Ref{14102601} by showing that, as $\sigma\to\infty$, (a) (I)$\convp 0$; (b) (II)$\to\int_{\pR}f(t)dL(t)$; (c) (III)$\to 0$ and (d) (IV)$\convp0$.

(a) Let $v_\sigma$ be the variance of (I), write $\Xi'(t):={|\Xi|}(h^{-1}(t))$ and 
$\Lambda'(t):={|\Lambda|}(h^{-1}(t))$, then 
using the geometric-arithmetic mean inequality $AB\leq (A^2+B^2)/2$ and symmetry to obtain the first inequality, we have
\ban{v_\sigma&=\int_{\theta_0}^\infty\int_a^b\int_{\theta_0}^\infty\int_a^b
\mean \left(\frac{\Xi'(s_1t_1)}{\Lambda'(s_1t_1)}-1\right)f'(t_1)\left(\frac{\Xi'(s_2t_2)}{\Lambda'(s_2t_2)}-1\right)f'(t_2)\nonumber\\
&\mbox{\hskip5.5cm} \times \Lambda'(s_1t_1)dt_1dF_\sigma(s_1)
\Lambda'(s_2t_2)dt_2dF_\sigma(s_2)\nonumber\\
&\le\int_{\theta_0}^\infty\int_a^b\int_{\theta_0}^\infty\int_a^b
\frac{\var(\Xi'(s_1t_1))}{\Lambda'(s_1t_1)^2}f'(t_1)^2\Lambda'(s_1t_1)dt_1dF_\sigma(s_1)
\Lambda'(s_2t_2)dt_2dF_\sigma(s_2)\nonumber\\
&\le\|f'\|^2(b-a)\mean\Lambda'(S(\sigma) b)\int_{\theta_0}^\infty\Lambda'(s_1b)\int_a^b
\frac{\var(\Xi'(s_1t_1))}{\Lambda'(s_1t_1)^2}dt_1dF_\sigma(s_1),\label{14102602}
} 
where $\|f'\|=\sup_{t\in\pR}|f'(t)|$.
For each $\epsilon>0$, let $T_\epsilon>\theta_0$ such that 
\[\frac{\var\left(\Xi'(sa)\right)}{\Lambda'(sa)^2}\le \epsilon,\ \forall s\ge T_\epsilon.\]
It follows from \Ref{14102602} that
\ban{v_\sigma&\le\|f'\|^2(b-a)\mean\Lambda'(S(\sigma) b)\left(\int_{\theta_0}^{T_\epsilon}\int_a^b
+\epsilon\int_{T_\epsilon}^\infty\int_a^b\right)
\Lambda'(s_1b)dt_1dF_\sigma(s_1)\nonumber\\
&\le\|f'\|^2(b-a)^2\mean\Lambda'(S(\sigma) b)\left(\mean \Lambda'(S(\sigma)b)\1_{S(\sigma)\le T_\epsilon}+\epsilon \mean\Lambda'(S(\sigma)b)\right).\label{14102603}
}
Using that $S(\sigma)\convp 0$, $\Lambda'(S(\sigma)b)\1_{S(\sigma)\le T_\epsilon}\le \Lambda'(T_\epsilon b)<\infty$, $\lim_{t\downarrow0}h^{-1}(t)=0$ and $\lim_{r\downarrow 0}{|\Lambda|}(r)=0$, we apply the bounded convergence theorem to obtain
\ben{\lim_{\sigma\to\infty}\mean \Lambda'(S(\sigma)b)\1_{S(\sigma)\le T_\epsilon}=0,\label{14102604}}
which, together with \Ref{14102603}, ensures
\be{\limsup_{\sigma\to\infty}v_\sigma\le\|f'\|^2(b-a)^2\epsilon L(b)^2.}
This yields $\lim_{\sigma\to\infty}v_\sigma=0$ due to the arbitrariness of $\epsilon$.

(b) Applying the dominated convergence theorem, we have
\ba{&-\int_0^\infty\int_a^b{|\Lambda|}(h^{-1}(st))f'(t)dtdF_\sigma(s)\\
&=-\int_a^b\mean{|\Lambda|}(h^{-1}(S(\sigma)t))f'(t)dt\\
&\to-\int_a^bf'(t)L(t)dt=\int_a^b f(t)dL(t)=\int_{\pR}f(t)dL(t), 
}
where the penultimate equality is due to Fubini's Theorem similar to~\eq{20001}.

(c) Use the same reasoning as that for \Ref{14102604}; as $\sigma\to\infty$,
\[\int_0^{\theta_0}\int_a^b{|\Lambda|}(h^{-1}(st))|f'(t)|dtdF_\sigma(s)
\le \|f'\|(b-a)\mean {|\Lambda|}(h^{-1}(S(\sigma)b))\1_{S(\sigma)\le \theta_0}\to 0.
\]

(d) It follows from (c) that, as $\sigma\to\infty$,
\[\mean\int_0^{\theta_0}\int_a^b{|\Xi|}(h^{-1}(st))|f'(t)|dtdF_\sigma(s)
\le \|f'\|(b-a)\mean{|\Lambda|}(h^{-1}(S(\sigma)b))\1_{S(\sigma)\le \theta_0}\to0.\]

{At this point the proof follows along the lines of Corollary~\ref{corollary1}.}

\noindent (ii) With the notations in the proof of Corollary~\ref{corollary1}, let {$Z^L$} be a Poisson process with mean measure $L$, then for each $t>0$,
\ban{&d_{2t}(\cl(N^{(\sigma)}|_t),\cl({Z^L}|_t))\nonumber\\
&\le \int_{\IR^d} \pxs(t)^2\Lambda(dx)+
\mean d_{1t}(\MXs|_t/\MXs(t),L|_t/L(t))+\mean 1\wedge|\MXs(t)-L(t)|.\label{14102605}
}

To show (ii), let $Z_\sigma^\Xi$ be a Cox process directed by the mean measure $\MXs$, then 
\[d_{2t}(\cl(N^{(\sigma)}|_t),\cl(Z_\sigma^\Xi|_t))\le d_{TV}(\cl(N^{(\sigma)}|_t),\cl(Z_\sigma^\Xi|_t))\]
and the first term in the upper bound of \Ref{14102605} comes from Theorem~\ref{thmdtv2}. For a Poisson process $\tilde Z$ on $\pR$ with mean measure $\lambda$, \cite[Theorem~1.5]{BrownXia1995} gives
\[d_{2t}(\cl(\tilde Z|_t),\cl({Z^L}|_t))\le d_{1t}(\lambda|_t/\lambda(t),L|_t/L(t))+1\wedge|\lambda(t)-L(t)|,\]
where $1$ is because  $d_{2t}\le 1$. Hence,
\ba{
d_{2t}(\cl(Z_\sigma^\Xi|_t),\cl({Z^L}|_t))&=\sup_f|\mean f(Z_\sigma^\Xi|_t)-\mean f({Z^L}|_t)|\\
&\le \mean \sup_f|\mean [f(Z_\sigma^\Xi|_t)|\Xi]-\mean f({Z^L}|_t)|\\
&\le \mean d_{1t}(\MXs|_t/\MXs(t),L|_t/L(t))+\mean1\wedge|\MXs(t)-L(t)|,
}
where $f$ ranges over all $d_{1t}$-Lipschitz functions on $\ch_t$. This gives the remaining two terms in the upper bound of \Ref{14102605}.

\noindent (iii) $N^{(\sigma)}$ converges weakly to a Poisson process with mean measure $L$.

To prove this claim, Lemma~\ref{lemPPPcon} implies it suffices to show that for each fixed $t\in C(L)$, the upper bound of \Ref{14102605} converges to $0$ as $\sigma\to\infty$. 
For the first term, we have, for each $r>0$,
\[\int_{\IR^d}\pxs(t)^2\Lambda(dx)\le {|\Lambda|}(r)+\pr(S(\sigma)\ge h(r)/t)\mean{|\Lambda|}(h^{-1}(S(\sigma)t)),\]
which, together with the assumption $S(\sigma)\convp 0$, implies
\[\limsup_{\sigma\to\infty}\int_{\IR^d}\pxs(t)^2\Lambda(dx)\le{|\Lambda|}(r).\]
Since $r$ is arbitrary and $\lim_{r\to0}{|\Lambda|}(r)=0$, we obtain
$\lim_{\sigma\to\infty}\int_{\IR^d}\pxs(t)^2\Lambda(dx)=0.$
For the second term in the upper bound of \Ref{14102605}, as $d_{1t}(\cdot,L|_t/L(t))$ is a continous function on 
\[\{v:\ v\mbox{ is {right continuous and nondecreasing} on }(0,t]\mbox{ with }
\lim_{s\downarrow 0}v(s)=0,\ v(t)=1\}\]
that equals zero at $L|_t/L(t)$, it follows from (i) and the continuity theorem that, as $\sigma\to\infty$,
\[d_{1t}(\MXs|_t/\MXs(t),L|_t/L(t))\convd0\]
 and hence
 $\mean d_{1t}(\MXs|_t/\MXs(t),L|_t/L(t))\to 0$. The third term converges to $0$ due to (i) and the bounded convergence theorem.

\end{proof}

\section{Final remarks}\label{14102301}
We show that under general conditions the wireless network signals appear to different observers at different times as different Poisson processes or different realizations of the same Cox process. This line of work strongly suggests that given the network is sufficiently large and stationary (or just isotropic) with strong enough random propagation effects such as fading and shadowing, then 
the signal strengths can be modeled directly as a Poisson or Cox process on the real line and the details of the distribution of the positioning of transmitters on the plane can be safely ignored. From the results presented here, there are 
many further directions of study:
For a given transmitter configuration, do some fading models induce a propagation process significantly closer to Poisson than others?
How do our results translate to functions of the propagation process, for example, the signal-to-interference ratio discussed in the introduction (c.f., \cite{keeler2014sinr})?
Can our results be extended to models with short range (spatial) dependence between the fading variables?

%%%%%%%%%%%%%%%%%%%%%%%%%%%%%
%%%%%%% References   %%%%%%%%
%%%%%%%%%%%%%%%%%%%%%%%%%%%%%

\def\ac{{Academic Press}~}
\def\aap{{Adv. Appl. Prob.}~}
\def\ap{{Ann. Probab.}~}
\def\anap{{Ann. Appl. Probab.}~}
\def\jap{{J. Appl. Probab.}~}
\def\jws{{John Wiley $\&$ Sons}~}
\def\ny{{New York}~}
\def\ptrf{{Probab. Theory Related Fields}~}
\def\sp{{Springer}~}
\def\spa{{Stochastic Processes and their Applications}~}
\def\sv{{Springer-Verlag}~}
\def\tpa{{Theory Probab. Appl.}~}
\def\zw{{Z. Wahrsch. Verw. Gebiete}~}

%%%%%%%%%%%%%%%%%%%%%%%%%%%%%%


\begin{thebibliography}{}

\bibitem[Baccelli and B{\l}aszczyszyn, 2009a]{baccelli2009stochastic1}
Baccelli, F. and B{\l}aszczyszyn, B. (2009a).
\newblock {\em Stochastic Geometry and Wireless Networks - Volume 1: Theory}.
\newblock Now Publishers Inc.

\bibitem[Baccelli and B{\l}aszczyszyn, 2009b]{baccelli2009stochastic2}
Baccelli, F. and B{\l}aszczyszyn, B. (2009b).
\newblock {\em Stochastic Geometry and Wireless Networks - Volume 2:
  Application}.
\newblock Now Publishers Inc.

\bibitem[Barbour and Brown, 1992]{BarbourBrown1992}
Barbour, A.~D. and Brown, T.~C. (1992).
\newblock Stein's method and point process approximation.
\newblock {\em Stoch. Proc. Appl.}, 43:9--31.

\bibitem[Barbour and Hall, 1984]{Barbour1984}
Barbour, A.~D. and Hall, P. (1984).
\newblock On the rate of {P}oisson convergence.
\newblock {\em Math. Proc. Cambridge Philos. Soc.}, 95(3):473--480.

\bibitem[Barbour et~al., 1992]{Barbour1992}
Barbour, A.~D., Holst, L., and Janson, S. (1992).
\newblock {\em {P}oisson approximation}, volume~2 of {\em Oxford Studies in
  Probability}.
\newblock The Clarendon Press Oxford University Press, New York.
\newblock Oxford Science Publications.

\bibitem[B{\l}aszczyszyn et~al., 2010]{blaszczyszyn2010impact}
B{\l}aszczyszyn, B., Karray, M., and Klepper, F. (2010).
\newblock Impact of the geometry, path-loss exponent and random shadowing on
  the mean interference factor in wireless cellular networks.
\newblock In {\em Third Joint IFIP Wireless and Mobile Networking Conference
  (WMNC)}.

\bibitem[B{\l}aszczyszyn et~al., 2013]{blaszczyszyn2013using}
B{\l}aszczyszyn, B., Karray, M.~K., and Keeler, H.~P. (2013).
\newblock Using {P}oisson processes to model lattice cellular networks.
\newblock In {\em INFOCOM, 2013 Proceedings IEEE}, pages 773--781. IEEE.

\bibitem[B{\l}aszczyszyn et~al., 2014]{blaszczyszyn2014wireless}
B{\l}aszczyszyn, B., Karray, M.~K., and Keeler, H.~P. (2014).
\newblock Wireless networks appear {Poissonian} due to strong shadowing.
\newblock Preprint: \url{http://arxiv.org/abs/1409.4739}.

\bibitem[B{\l}aszczyszyn and Keeler, 2014]{blaszczyszyn2014studying}
B{\l}aszczyszyn, B. and Keeler, H.~P. (2014).
\newblock Studying the {SINR} process of the typical user in {P}oisson networks
  by using its factorial moment measures.
\newblock Preprint: \url{http://arxiv.org/abs/1401.4005}.

\bibitem[B{\l}aszczyszyn and Yogeshwaran, 2014]{soellerhaus2014}
B{\l}aszczyszyn, B. and Yogeshwaran, D. (2014).
\newblock Clustering comparison of point processes with applications to random
  geometric models.
\newblock In Schmidt, V., editor, {\em Stochastic Geometry, Spatial Statistics
  and Random Fields: Models and Algorithms}, volume 2120 of {\em Lecture Notes
  in Mathematics}, chapter~2, pages 31--71. Springer.

\bibitem[Brown, 1979]{Brown1979}
Brown, T.~C. (1979).
\newblock Position dependent and stochastic thinning of point processes.
\newblock {\em Stoch. Proc. Appl.}, 9:189--193.

\bibitem[Brown and Xia, 1995]{BrownXia1995}
Brown, T.~C. and Xia, A. (1995).
\newblock On metrics in point process approximation.
\newblock {\em Stochast. Stochast. Rep.}, 52:247--263.

\bibitem[Franceschetti et~al., 2004]{franceschetti2004random}
Franceschetti, M., Bruck, J., and Schulman, L.~J. (2004).
\newblock A random walk model of wave propagation.
\newblock {\em Antennas and Propagation, IEEE Transactions on},
  52(5):1304--1317.

\bibitem[Haenggi, 2008]{haenggi2008geometric}
Haenggi, M. (2008).
\newblock A geometric interpretation of fading in wireless networks: Theory and
  applications.
\newblock {\em Information Theory, IEEE Transactions on}, 54(12):5500--5510.

\bibitem[Haenggi, 2012]{haenggi2012stochastic}
Haenggi, M. (2012).
\newblock {\em Stochastic geometry for wireless networks}.
\newblock Cambridge University Press.

\bibitem[Haenggi et~al., 2009]{haenggi2009stochastic}
Haenggi, M., Andrews, J.~G., Baccelli, F., Dousse, O., and Franceschetti, M.
  (2009).
\newblock Stochastic geometry and random graphs for the analysis and design of
  wireless networks.
\newblock {\em Selected Areas in Communications, IEEE Journal on},
  27(7):1029--1046.

\bibitem[Kallenberg, 1975]{Kallenberg1975}
Kallenberg, O. (1975).
\newblock Limits of compound and thinned point processes.
\newblock {\em J. Appl. Prob.}, 12:269--278.

\bibitem[Kallenberg, 1983]{Kallenberg1983}
Kallenberg, O. (1983).
\newblock {\em Random Measures}.
\newblock Academic Press, London, 3rd edition.

\bibitem[Keeler and Blaszczyszyn, 2014]{keeler2014sinr}
Keeler, H. and Blaszczyszyn, B. (2014).
\newblock {SINR} in wireless networks and the two-parameter
  {P}oisson-{D}irichlet process.
\newblock {\em Wireless Communications Letters, IEEE}, 3(5):525--528.

\bibitem[Le~Cam, 1960]{LeCam1960}
Le~Cam, L. (1960).
\newblock An approximation theorem for the {P}oisson binomial distribution.
\newblock {\em Pacific J. Math.}, 10:1181--1197.

\bibitem[Lee et~al., 2013]{lee2013stochastic}
Lee, C.-H., Shih, C.-Y., and Chen, Y.-S. (2013).
\newblock Stochastic geometry based models for modeling cellular networks in
  urban areas.
\newblock {\em Wireless networks}, 19(6):1063--1072.

\bibitem[Miyoshi and Shirai, 2014a]{Miyoshi2014}
Miyoshi, N. and Shirai, T. (2014a).
\newblock A cellular network model with {G}inibre configured base stations.
\newblock {\em Adv. in Appl. Probab.}, 46(3):832--845.

\bibitem[Miyoshi and Shirai, 2014b]{miyoshi2014cellular}
Miyoshi, N. and Shirai, T. (2014b).
\newblock Cellular networks with $\alpha$-{G}inibre configurated base stations.
\newblock In {\em The Impact of Applications on Mathematics}, pages 211--226.
  Springer.

\bibitem[Rachev, 1984]{Rachev1984}
Rachev, S.~T. (1984).
\newblock The {Monge-Kantorovich} mass transference problem and its stochastic
  applications.
\newblock {\em Theory Probab. Appl.}, 29:647--676.

\bibitem[Reig and Rubio, 2013]{reig2013estimation}
Reig, J. and Rubio, L. (2013).
\newblock Estimation of the composite fast fading and shadowing distribution
  using the log-moments in wireless communications.
\newblock {\em Wireless Communications, IEEE Transactions on},
  12(8):3672--3681.

\bibitem[R{{\"o}}llin and Ross, 2012]{Roellin2014}
R{{\"o}}llin, A. and Ross, N. (2012).
\newblock Local limit theorems via {L}andau-{K}olmogorov inequalities.
\newblock Preprint \url{http://arxiv.org/abs/1011.3100}, to appear in
  \emph{Bernoulli}.

\bibitem[Schuhmacher, 2005]{Schuhmacher2005}
Schuhmacher, D. (2005).
\newblock Distance estimates for {P}oisson process approximations of dependent
  thinnings.
\newblock {\em Electron. J. Probab.}, 10:no. 5, 165--201.

\bibitem[Schuhmacher, 2009]{Schuhmacher2009}
Schuhmacher, D. (2009).
\newblock Distance estimates for dependent thinnings of point processes with
  densities.
\newblock {\em Electron. J. Probab.}, 14:no. 38, 1080--1116.

\bibitem[Xia, 2005]{Xia2005}
Xia, A. (2005).
\newblock Stein's method and {Poisson} process approximation.
\newblock {\em An Introduction to Stein's Method, Eds. A.~D.~Barbour \&
  L.~H.~Y.~Chen, World Scientific Press, Singapore}, pages 115--181.

\bibitem[{Zhang} and {Andrews}, 2014]{dualslope}
{Zhang}, X. and {Andrews}, J.~G. (2014).
\newblock Downlink cellular network analysis with multi-slope path loss models.
\newblock Preprint: \url{http://arxiv.org/abs/1408.0549}.

\end{thebibliography}
\end{document}